\definecolor{clemson-orange}{RGB}{234,106,32}
\definecolor{chicago-maroon}{RGB}{128,0,0}
\definecolor{cincinnati-red}{RGB}{190,0,0}
\definecolor{soft-cyan}{RGB}{68,85,90}
\newcolumntype{L}[1]{>{\raggedright\let\newline\\\arraybackslash\hspace{0pt}}m{#1}}
\newcolumntype{C}[1]{>{\centering\let\newline\\\arraybackslash\hspace{0pt}}m{#1}}
\newcolumntype{R}[1]{>{\raggedleft\let\newline\\\arraybackslash\hspace{0pt}}m{#1}}
\newcommand{\bb}{\mathbb}
\newcommand{\R}{\bb R}
\newcommand{\Z}{{\bb Z}}
\newcommand{\N}{{\bb N}}
\theoremstyle{definition}
\newtheorem{theorem}{Theorem}[section]
\newtheorem{lemma}[theorem]{Lemma}
\newtheorem{corollary}[theorem]{Corollary}
\newtheorem{prop}[theorem]{Proposition}
\newtheorem{definition}[theorem]{Definition}
\newtheorem{example}[theorem]{Example}
\newtheorem{conjecture}{Conjecture}
\newcommand*\patchAmsMathEnvironmentForLineno[1]{%
  \expandafter\let\csname old#1\expandafter\endcsname\csname #1\endcsname
  \expandafter\let\csname oldend#1\expandafter\endcsname\csname end#1\endcsname
  \renewenvironment{#1}%
     {\linenomath\csname old#1\endcsname}%
     {\csname oldend#1\endcsname\endlinenomath}}%
\newcommand*\patchBothAmsMathEnvironmentsForLineno[1]{%
  \patchAmsMathEnvironmentForLineno{#1}%
  \patchAmsMathEnvironmentForLineno{#1*}}%
\DeclareMathOperator*{\lin}{lin}
\DeclareMathOperator*{\conv}{conv}
\DeclareMathOperator*{\intt}{int}
\numberwithin{equation}{section}  
\title{Two-halfspace closure}
\author{Amitabh Basu\thanks{Department of Applied Mathematics and Statistics, Johns Hopkins University, Baltimore, MD, USA ({\tt basu.amitabh@jhu.edu}, {\tt hjiang32@jhu.edu}). Supported by the ONR Grant N000141812096, NSF Grant CCF2006587 and the AFOSR Grant FA95502010341.} 
\and Hongyi Jiang\footnotemark[1]}
\date{\today}
\date{\today}
\begin{document}

\maketitle

\begin{abstract} We define a new cutting plane closure for pure integer programs called the {\em two-halfspace closure}. It is a natural generalization of the well-known Chv\'atal-Gomory closure. We prove that the two-halfspace closure is polyhedral. We also study the corresponding {\em 2-halfpsace rank} of any valid inequality and show that it is at most the split rank of the inequality. Moreover, while the split rank can be strictly larger than the two-halfspace rank, the split rank can be at most twice the two-halfspace rank. A key step of our analysis shows that the split closure of a rational polyhedron can be obtained by considering the split closures of all $k$-dimensional (rational) projections of the polyhedron, for any fixed $k \geq 2$. This result may be of independent interest.
\end{abstract}

\section{Cutting planes and closures}

A central question in discrete geometry and integer programming is understanding the convex hull of integer points in a polyhedron, both structurally and algorithmically. Motivated by Edmonds' early groundbreaking work in polyhedral combinatorics and combinatorial optimization, Chv\'atal~\cite{chvatal1973edmonds} proposed a very general method that provides insight into this question\footnote{While we  emphasize the perspective that Chv\'atal took, Gomory had developed a closely related approach in the late 50s and early 60s~\cite{Gomory63,gomory1960algorithm,MR0102437}.}. Shortcutting the historical development, the main idea is as follows. Let $P\subseteq \R^n$ be a rational polyhedron (allowing irrationality leads to some pathologies which we will avoid in this paper). Then for any rational halfspace $H$ containing $P$, we have $\conv(H\cap \Z^n) \supseteq \conv(P\cap \Z^n)$. It is easy to verify that $\conv(H \cap \Z^n)$ is again a halfspace called a {\it Chv\'atal-Gomory (CG) cutting plane for $P$}. It was shown by Chv\'atal~\cite{chvatal1973edmonds} that repeated applications of this operation can obtain any valid inequality for $\conv(P\cap \Z^n)$ (we will often use the notation $P_I$ to denote $\conv(P\cap \Z^n)$ for any polyhedron $P\subseteq \R^n$). To make this precise, for any polyhedron $P$ define the {\it Chv\'atal-Gomory (CG) closure of $P$} to be $$\mathcal{C}(P):=\bigcap_{\substack{H \textrm{ rational halfspace} \\ \textrm{such that }P\subseteq H}} H_I.$$ It is well-known that if $P\subseteq \R^n$ is a rational polyhedron, then $\mathcal{C}(P)$ is also a rational polyhedron even though it is defined by the intersection of infinitely many halfspaces (this is summarized below in Theorem~\ref{thm:chv-closure-II}). Thus, one can recursively define $\mathcal{C}^0(P):= P$ and $\mathcal{C}^k(P)= \mathcal{C}(\mathcal{C}^{k-1}(P))$ for $k \geq 1$. $\mathcal{C}^k(P)$  is called the {\it $k$-th Chv\'atal-Gomory (CG) closure} or the {\it rank $k$ Chv\'atal-Gomory (CG) closure}. The main power of this operation is summarized in the following theorem.

%
%
\begin{theorem}\label{thm:chv-closure-II}[\cite[Lemma 5.13]{conforti2014integer},\cite[Theorem 23.4]{sch}] There exists a computable function $t: \Z^{m\times n} \to \N$ such that for any rational polyhedron $P=\{x\in \R^n: Ax \leq b\}$ given by $A \in \Z^{m\times n}$ and $b\in \Z^m$, $$\mathcal{C}(P) = \bigcap_{\substack{ u \in [0,1)^n: \\ u^TA \in \Z^n}}\{x \in \R^n: \langle u^T A, x \rangle \leq \lfloor u^Tb \rfloor\}$$ and $$\mathcal{C}^{t(A)}(P) = \conv(P\cap \Z^n).$$ 
\end{theorem}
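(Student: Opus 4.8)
\emph{Part 1: the polyhedral description of $\mathcal{C}(P)$.} The plan is the classical three-step reduction. First, any rational halfspace $H\supseteq P$ may be normalized to $\{x:\langle c,x\rangle\le\delta\}$ with $c$ a primitive integral vector, and then tightened to $\delta=\delta_c:=\max\{\langle c,x\rangle:x\in P\}$ (assumed finite), giving $H_I=\{x:\langle c,x\rangle\le\lfloor\delta_c\rfloor\}$; so $\mathcal{C}(P)$ is the intersection of these over all integral $c$ with $\delta_c<\infty$. Second, by LP duality $\delta_c=\min\{\langle b,u\rangle:u\ge\0,\ A^{T}u=c\}$, attained at a \emph{rational} vertex $u^{\ast}$; writing $u^{\ast}=\lfloor u^{\ast}\rfloor+f$ componentwise with $f\in[0,1)^m$ yields $A^{T}f=c-A^{T}\lfloor u^{\ast}\rfloor\in\Z^n$ and $\langle b,f\rangle=\delta_c-\langle b,\lfloor u^{\ast}\rfloor\rangle$, so the cut $\langle c,x\rangle\le\lfloor\delta_c\rfloor$ is the sum of a nonnegative combination of the rows of $Ax\le b$ and the cut generated by $f$. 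Hence $\mathcal{C}(P)=P\cap\bigcap\{x:\langle A^{T}f,x\rangle\le\lfloor\langle b,f\rangle\rfloor\}$ over all $f\in[0,1)^m$ with $A^{T}f\in\Z^n$. Third, as $f$ ranges over $[0,1)^m$ the vector $A^{T}f$ stays in a fixed bounded box, so only finitely many integral values $v=A^{T}f$ occur; for each, the admissible $f$ form a rational polytope, on which the single strongest cut $\langle v,x\rangle\le m_v$ is read off. This presents $\mathcal{C}(P)$ as a finite intersection of rational halfspaces.

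\emph{Part 2: reduction for finite termination.} Every CG cut is valid for $P_I$, so $P_I\subseteq\mathcal{C}^k(P)\subseteq P$ for all $k$, and it suffices to produce, computably from $A$, a $t$ with $\mathcal{C}^t(P)\subseteq P_I$. The proof is induction on $\dim P$, using Part 1 (so every closure, face and slice below is a rational polyhedron with an effectively bounded description) together with two lemmas. The \emph{face lemma}: if $F$ is a face of a rational polyhedron $R$, then $\mathcal{C}^k(R)\cap F\subseteq\mathcal{C}^k(F)$ for all $k$ — proved by tilting any CG cut of $F$ through integral multiples of the equation cutting out $F$ into a CG cut of $R$. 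The \emph{flatness theorem}: there is $w(n)$ such that a full-dimensional rational polytope in $\R^n$ containing no integer point has lattice width $\le w(n)$, with a flat direction computable; an unbounded lattice-point-free $P$ reduces, by projecting out a primitive integral vector of $\rec(P)$, to one lower dimension, and the linear maxima over $P$ that appear below are all finite because $P$ cannot contain an integer ray that escapes $P_I$.

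\emph{Part 2: the induction.} For $\dim P\le 0$ the claim is immediate. In the step, if $P_I=\emptyset$: flatness gives an integral $c$ so that after one round $\mathcal{C}^1(P)$ lies in a slab $\{\ell\le\langle c,x\rangle\le L\}$ containing at most $w(n)+1$ integers; each integer slice $\{\langle c,x\rangle=k\}$ of it is lattice-point-free of dimension $<n$, and peeling them off from the top (each is a \emph{face} of the current polyhedron, hence by the induction hypothesis and the face lemma is killed in boundedly many further rounds) drives the closure to $\emptyset$. If $P_I\ne\emptyset$ with $\dim P_I<n$: for each integral equation $\langle c,x\rangle=d$ of $\aff(P_I)$, the slices of $P$ on which $\langle c,\cdot\rangle$ takes an integer value strictly between $d$ and $\lfloor\max_P\langle c,\cdot\rangle\rfloor$ (and symmetrically below $d$) are lattice-point-free of lower dimension, so the same peeling forces $\mathcal{C}^{t'}(P)\subseteq\aff(P_I)$, after which the induction hypothesis — applied, via a unimodular change of coordinates, inside the rational subspace $\aff(P_I)$ of dimension $<n$, where the integer hull is still $P_I$ — finishes. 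If $\dim P_I=n$, write $P_I=\{x:\langle c_j,x\rangle\le\gamma_j\}$ with $c_j$ primitive integral and $\gamma_j\in\Z$; for each $j$ the slices of $P$ on which $\langle c_j,\cdot\rangle$ takes an integer value in $(\gamma_j,\lfloor\max_P\langle c_j,\cdot\rangle\rfloor]$ are lattice-point-free of lower dimension, so peeling yields $\mathcal{C}^{t_j}(P)\subseteq\{x:\langle c_j,x\rangle\le\gamma_j\}$, and $t=\max_j t_j$ works.

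\emph{Main obstacle.} Two points carry the real weight. The quantitative claim that $t$ depends on $A$ \emph{only}: the recursion as sketched gives $t$ computable from $(A,b)$, and to see it is uniform over integral $b$ one must bound — independently of $b$ — the facet complexity of every slice and every intermediate closure produced along the way; this uniform control is the technical core of the Chv\'atal/Schrijver argument. And one must invoke an algorithmic (rational) form of Khinchine's flatness theorem, ensuring the flat direction is obtained computably. The face lemma and the coordinate reductions are routine once these are in place.
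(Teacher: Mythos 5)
The paper does not actually prove this theorem; it is quoted from the literature (\cite[Lemma 5.13]{conforti2014integer}, \cite[Theorem 23.4]{sch}), so your proposal has to be measured against those standard proofs. Your Part 1 is essentially the textbook argument — normalize to a primitive integral normal, tighten via LP duality, split an optimal dual vertex $u^\ast$ into $\lfloor u^\ast\rfloor + f$, and note that only finitely many integral values of $A^Tf$ occur — and it is correct up to bookkeeping: the multipliers range over $[0,1)^m$, not $[0,1)^n$, the infeasible/unbounded cases need a word, and what you obtain is $P\cap\bigcap_f\{x:\langle A^Tf,x\rangle\le\lfloor\langle b,f\rangle\rfloor\}$, i.e.\ the original inequalities are retained.

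The genuine gap is in Part 2, and it sits exactly at the feature that distinguishes the statement as quoted from the weaker ``for every rational polyhedron there is some finite $t$'': the rank bound must depend on $A$ alone, uniformly over all integral $b$. Your induction does not deliver this, as you concede under ``Main obstacle.'' Concretely, in the cases $P_I\neq\emptyset$ the number of peeling rounds in direction $c_j$ is the number of integers in $(\gamma_j,\lfloor\max_P\langle c_j,x\rangle\rfloor]$, which depends on $b$; flatness only controls the lattice-point-free case (where your recursion does give a bound depending on the dimension alone, which is a legitimate variant of Schrijver's argument). To make the remaining cases uniform one needs two further ingredients that your sketch never supplies: (i) the facets of $P_I$ admit integral normals whose size is bounded by a function of $A$ only, and (ii) a proximity bound of Cook--Gerards--Schrijver--Tardos type, $\max_P\langle c_j,x\rangle-\max_{P_I}\langle c_j,x\rangle\le f(A)\,\|c_j\|$, independent of $b$; together these bound the number of integer slices to peel per direction in terms of $A$, and only then does the recursion (with the induction hypothesis itself stated uniformly in the right-hand side) produce $t=t(A)$. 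As written, your argument establishes only $\mathcal{C}^{t(A,b)}(P)=P_I$. Two smaller points are fixable but should be said: each peeled slice is a face of the current closure because the inequality $\langle c,x\rangle\le K$ has become valid after the preceding round, and iterating the face lemma uses that $\mathcal{C}^{k}(R)\cap F$ is again a face of $\mathcal{C}^{k}(R)$.
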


Thus, if one enumerates the finitely many points in $\{u \in [0,1)^n: u^TA \in \Z^n\}$, then one can compute the Chv\'atal-Gomory closure, and repeating this process $t(A)$ number of times, one can compute $\conv(P\cap \Z^n)$, which is called the {\it integer hull of $P$}.

\paragraph{Two-halfspace closure.} One can view the above discussion as follows. One wishes to compute $\conv(P\cap \Z^n)$ which seems complicated. However, if $P$ is a rational {\it halfspace}, then computing the integer hull is easy: simply express $H = \{x\in \R^n : \langle a, x \rangle \leq b\}$, where $a \in \Z^n$ has relatively prime coordinates (this can be done because $H$ is a rational halfspace) and then $H_I = \{x\in \R^n : \langle a, x \rangle \leq \lfloor b\rfloor \}$. One next observes that if one has a rational halfspace relaxation $H \supseteq P$, then clearly $H_I \supseteq P_I$. The deep insight from Theorem~\ref{thm:chv-closure-II} is that with a finite choice of these halfspace relaxations and a finite repetition of this operation, one can completely describe the integer hull of $P$.

Thus, the main idea is to find a ``simple" relaxation of $P$ whose integer hull is easier to compute. But then why stop at halfspace relaxations? What about relaxations of $P$ obtained by the intersection of {\it two} rational halfspaces and computing the integer hull of such relaxations? Surprisingly, to the best of our knowledge, this particular question has not been posed or investigated in the literature. In this paper, we initiate this discussion.

\begin{definition}
Given a polyhedron $P$, 
the {\it two-halfspace closure} of $P$, which is denoted by $\mathcal{T}(P)$, is defined as 
\begin{equation}
    \mathcal{T}(P)=\bigcap_{\substack{H_1,H_2 \mbox{ rational halfspaces}\\\mbox{such that }P\subseteq H_1\cap H_2}}(H_1\cap H_2)_I
\end{equation}
\end{definition}

To make this useful computationally, one revisits the following questions that come up for the Chv\'atal-Gomory cutting planes.  Affirmative answers are needed to make this definition interesting from an algorithmic perspective.

\begin{enumerate}
\item Is it easier to compute $(H_1 \cap H_2)_I$ compared to $P_I$ itself? 
\item Is $\mathcal{T}(P)$ a rational polyhedron when $P$ is a rational polyhedron?
\item If we take the two-halfspace closure repeatedly, then do we arrive at the convex hull in a finite number of steps?
\end{enumerate}

The answer to Question 1. above is YES, in the following sense. It is not hard to argue that computing $(H_1\cap H_2)_I$ is equivalent to computing a two-dimensional integer hull. This is done by projecting $H_1\cap H_2$ and $\Z^n$ onto the two-dimensional orthogonal complement of the lineality space of $H_1\cap H_2$ (see Proposition \ref{prop::IHtwo}). Computing a basis for the lattice obtained by projecting $\Z^n$ onto a linear subspace of $\R^n$ can be achieved in polynomial time using Hermite Normal Form computations~\cite[Chapters 4 and 5]{sch}. This reduces the problem to computing the integer hull of a simplicial cone in two dimensions. Since there are explicit polynomial time algorithms for computing two-dimensional integer hulls~\cite{harvey1999computing,cook-hartmann-kannan-mcdiarmid-1992}, this makes the problem of computing $(H_1\cap H_2)_I$ computationally easier than computing $P_I$, at least theoretically speaking.

The answer to Question 3. is YES, somewhat trivially: $\mathcal{T}(P) \subseteq \mathcal{C}(P)$ since one may take $H_1 = H_2$. Thus, by Theorem~\ref{thm:chv-closure-II}, a finite number of applications of the two-halfspace closure operation gives the convex hull; in fact, the number of steps needed for the two-halfspace closure can be much smaller compared to the Chv\'atal-Gomory procedure. We make this precise below in Corollary~\ref{cor:split-2-half}.

The main result of this paper is an affirmative answer to Question 2. above. 

\begin{theorem}\label{thm:2-halfspace-poly} Let $P$ be a rational polyhedron. Then $\mathcal{T}(P)$ is a rational polyhedron.
\end{theorem}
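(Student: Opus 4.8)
The plan is to replace the infinite family of two-halfspace cuts in the definition of $\mathcal{T}(P)$ by a finite subfamily. Since each set $(H_1\cap H_2)_I$ is a rational polyhedron arising from a two-dimensional integer-hull computation (this is essentially Proposition~\ref{prop::IHtwo}), and a finite intersection of rational polyhedra is again a rational polyhedron, this will suffice.

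I would begin with two standard reductions. First, because $Q\subseteq Q'$ forces $Q_I\subseteq Q'_I$, tightening a halfspace in a fixed direction only strengthens the cut it produces; hence in the intersection defining $\mathcal{T}(P)$ we lose nothing by keeping only the supporting halfspaces $H_a:=\{x:\langle a,x\rangle\le \max_{y\in P}\langle a,y\rangle\}$, where $a$ ranges over the primitive integral vectors for which this maximum is finite, i.e.\ over $\rec(P)^{\circ}\cap\Z^n$. Second, taking $H_1=H_2$ gives $\mathcal{T}(P)\subseteq\mathcal{C}(P)$, while $(H_1\cap H_2)_I\supseteq P_I$ gives $P_I\subseteq\mathcal{T}(P)$; thus $\mathcal{T}(P)$ is sandwiched between the rational polyhedra $P_I$ and $\mathcal{C}(P)$ (the latter being polyhedral by Theorem~\ref{thm:chv-closure-II}), and in particular all ``one‑halfspace'' (Chv\'atal--Gomory) behaviour is already accounted for, so only the genuinely two-dimensional cuts remain to be controlled.

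Next I would group the remaining cuts by the faces of $P$ that the two halfspaces touch. Each primitive $a\in\rec(P)^{\circ}\cap\Z^n$ exposes a face $F(a)$ of $P$ and lies in its normal cone $N_{F(a)}$, a rational polyhedral cone generated by the primitive normals of the facets of $P$ containing $F(a)$. As $P$ has finitely many faces, it is enough to show that for each ordered pair of faces $(F_1,F_2)$ the set
\[
 S_{F_1,F_2}\ :=\ \bigcap_{a_1\in N_{F_1}\cap\Z^n,\ a_2\in N_{F_2}\cap\Z^n}\bigl(H_{a_1}\cap H_{a_2}\bigr)_I
\]
is a rational polyhedron, for then $\mathcal{T}(P)=\bigcap_{(F_1,F_2)}S_{F_1,F_2}$ is a finite intersection of rational polyhedra. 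To analyze a single $S_{F_1,F_2}$ one passes, as in Proposition~\ref{prop::IHtwo}, to the two-dimensional quotient orthogonal to the lineality space of $H_{a_1}\cap H_{a_2}$, and uses two structural facts: writing $a_i$ as a nonnegative combination of the facet normals through $F_i$ shows $H_{a_1}\cap H_{a_2}\supseteq W_{F_1,F_2}:=\bigcap\{\text{facet halfspaces of }P\text{ through }F_1\text{ or }F_2\}$, so $(W_{F_1,F_2})_I\subseteq S_{F_1,F_2}$; and the ``corner'' of the wedge, $\{x:\langle a_1,x\rangle=\max_P\langle a_1,\cdot\rangle,\ \langle a_2,x\rangle=\max_P\langle a_2,\cdot\rangle\}$, always contains $F_1\cap F_2$, so the facets of $(H_{a_1}\cap H_{a_2})_I$ beyond those inherited from $P$ (the lattice ``staircase'') are anchored along this corner, and only the portion near the fixed face $F_1\cap F_2$ can be relevant.

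The technical heart — and where I expect the main obstacle to lie — is the finiteness estimate that makes the last point rigorous: one must bound, uniformly over all $a_1\in N_{F_1}$, $a_2\in N_{F_2}$, how far the staircase of $(H_{a_1}\cap H_{a_2})_I$ can cut into $P$ near $F_1\cap F_2$. The guiding intuition is that a wedge $H_{a_1}\cap H_{a_2}$ which is too ``wide'' behaves essentially like a single halfspace and so contributes nothing beyond $\mathcal{C}(P)$, whereas one which is too ``narrow'' has its staircase pushed far from $P$ and so contributes nothing beyond $P_I$. Making this quantitative — via a Hermite‑normal‑form normalization of the pair $(a_1,a_2)$ in the quotient lattice together with the continued‑fraction description of two‑dimensional integer hulls — should confine the pairs $(a_1,a_2)$ capable of contributing a new facet of $S_{F_1,F_2}$ to a finite set, yielding polyhedrality of $S_{F_1,F_2}$ and hence of $\mathcal{T}(P)$.
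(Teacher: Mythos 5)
Your reductions at the start are fine (restricting to supporting halfspaces with primitive integral normals, and grouping pairs by the faces they expose are both legitimate, since $P$ has finitely many faces), but the proof has a genuine gap exactly where you flag it: the claim that only finitely many pairs $(a_1,a_2)\in N_{F_1}\times N_{F_2}$ can contribute a non-dominated facet of $S_{F_1,F_2}$ is asserted only as a heuristic (``too wide behaves like one halfspace, too narrow is pushed away from $P$'') and never proved. That finiteness statement \emph{is} the theorem; nothing in the earlier steps reduces its difficulty. Two concrete obstacles stand in the way of the sketch. First, the ``staircase'' of $(H_{a_1}\cap H_{a_2})_I$ is not anchored only near $F_1\cap F_2$: as the wedge narrows, the lattice-free corner region grows without bound relative to $P$, and its facets can cut into $P$ at locations and depths governed by the continued-fraction expansion of the slope data, which varies with $(a_1,a_2)$ in an unbounded way; a uniform ``relevance radius'' around $F_1\cap F_2$ does not exist without a quantitative argument of the same difficulty as the polyhedrality of the split closure. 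Second, the two-dimensional quotient (the lattice $\Lambda_L$ for $L=\lin(H_{a_1}\cap H_{a_2})^\perp$) changes with every pair $(a_1,a_2)$, so the proposed Hermite-normal-form normalization compares integer hulls taken with respect to infinitely many different planar lattices; the proposal gives no mechanism for making the bound uniform across them. In short, the argument is a plan to redo a Cook--Kannan--Schrijver-type finiteness proof by hand in this setting, but the hard estimate is left open.

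For comparison, the paper sidesteps this finiteness analysis entirely. Using Proposition~\ref{prop::IHtwo} and the fact that a two-dimensional integer hull is the CG closure of the split closure (Theorem~\ref{thm:rank-IH}), Lemma~\ref{thm::TP-decomp} rewrites $\mathcal{T}(P)$ as $P^1$ intersected with a (possibly infinite) family of CG cuts for $P^1$, where $P^1=\bigcap_{L\in\mathcal{L}_2}(\mathcal{S}_L(P)+L^\perp)$; Theorem~\ref{thm:split-projs} identifies $P^1$ with the split closure $\mathcal{S}(P)$. Polyhedrality then follows from two known results: the split closure of a rational polyhedron is a polyhedron, and any family of CG cuts for a rational polyhedron is dominated by a finite subfamily. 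If you want to salvage your direct approach, you would essentially have to prove a quantitative bound playing the role of those two imported theorems, which is substantially more work than the paper's route.
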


The idea of considering ``simple" relaxations of $P$ other than halfspaces and computing their integer hulls as a means to make progress towards $P_I$ is not new to this paper. Gomory~\cite{gom}, in an influential paper, considers relaxations of $P$ that are simplicial cones obtained from $n$ linearly independent defining constraints of $P$; he termed the integer hulls of these cones {\it corner polyhedra}. This idea has resulted in decades long research into corner polyhedra. More recently, {\it aggregation closures} were defined for covering and packing polyhedra~\cite{bodur2018aggregation}, with follow up work extending and generalizing this idea~\cite{pashkovich2019,lee2019generalized,lee2020generalized,dey2018strength}. In these works, a relaxation given by halfspaces is strengthened with variable bounds.

\paragraph{A comparison with the split closure.} The preceding discussion approaches the integer hull question by considering the integer hulls of ``simple" relaxations. Another approach to cutting planes comes from the idea of a {\it disjunction}, which is a finite collection of polyhedra whose union contains all of $\Z^n$. Given a disjunction $D = Q_1 \cup \ldots \cup Q_k$ such that $\Z^n \subseteq D$, a {\it cutting plane for $P$ derived from $D$} is defined to be any halfspace $H$ such that $P \cap D \subseteq H$. Since $\Z^n \subseteq D$, we have that $P\cap \Z^n \subseteq H$ and so $\conv(P\cap \Z^n) \subseteq H$. The idea again is that convexifying $P\cap D$ is easier than convexifying $P\cap \Z^n$. This is made more precise by appealing to Balas' work on concrete descriptions of the union of polyhedra and disjunctive programming~\cite{balas2018disjunctive}; see also~\cite[Sections 4.9 and 5.5]{conforti2014integer}.

Cook, Kannan and Schrijver~\cite{cook-kannan-schrijver} introduced and studied the simplest form of disjunctions: union of two disjoint halfspaces, i.e., disjunctions of the form $D_{a,K} = \{x\in \R^n: \langle a, x \rangle \leq K\}\cup  \{x\in \R^n: \langle a, x \rangle \geq K+1\}$, where $a\in \Z^n$ and $K \in \Z$. The closure of the complement of such disjunctions are called {\it split sets}, i.e., sets of the form $\{x\in \R^n: K \leq \langle a, x \rangle \leq K+1\}$. Define $P^{a,K}:= \conv(P\cap D_{a,K})$. The {\it split closure} of $P$ is defined to be \begin{equation}\label{eq:split-closure}\mathcal{S}(P) := \bigcap_{a \in \Z^n, K \in \Z} P^{a,K}.\end{equation}

In the spirit of the preceding discussions, a natural question is whether the split closure is a polyhedron. In~\cite{cook-kannan-schrijver}, the authors establish that this is indeed the case for any rational polyhedron $P$. Moreover, one can then repeat this operation and define the $k$-th split closure analogous to how the $k$-th Chv\'atal-Gomory closure was defined. The question as to whether a finite application of the split closure ends in the integer hull\footnote{For any family of disjunctions, one can pose similar questions about the closure with respect to this family; such discussions are outside the scope of this paper.} is also answered affirmatively by observing that $\mathcal{S}(P) \subseteq \mathcal{C}(P)$ because any Chv\'atal-Gomory cutting plane $\langle a, x \rangle \leq \delta$ is valid for $P^{a,\delta}$.

We wish to compare the strength of the two-halfspace closure and the split closure (recall that both are subsets of the Chv\'atal-Gomory closure). To make this precise, define $\mathcal{T}^k(P)$ to be the polyhedron obtained by $k$ repetitions of the two-halfspace closure operation and $\mbox{rank}_{\mathcal{T}}(P)$ to be the smallest natural number $k\in \N$ such that $\mathcal{T}^k(P) = P_I$. Similarly, define $\mathcal{S}^k(P)$ to be the polyhderon obtained by $k$ repetitions of the split closure operation and $\mbox{rank}_{\mathcal{S}}(P)$ to be the smallest natural number $k\in \N$ such that $\mathcal{S}^k(P) = P_I$. Then, our proof of Theorem~\ref{thm:2-halfspace-poly} gives the following result as a byproduct.

\begin{corollary}\label{cor:split-2-half} For all $k\geq 0$, $$\mathcal{S}^{2k}(P) \subseteq \mathcal{T}^k(P) \subseteq \mathcal{S}^k(P).$$ Consequently,
\end{corollary}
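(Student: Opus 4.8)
The plan is to derive the whole chain from the two ``rank-one'' inclusions
\begin{equation}\label{eq:base-incl}
\mathcal S^2(P)\ \subseteq\ \mathcal T(P)\ \subseteq\ \mathcal S(P)
\end{equation}
together with the elementary fact that $\mathcal S$ and $\mathcal T$ are monotone under inclusion: if $P\subseteq P'$ then $P\cap D\subseteq P'\cap D$ for every disjunction $D$, so $\mathcal S(P)\subseteq\mathcal S(P')$; and every pair of rational halfspaces whose intersection contains $P'$ has its intersection containing $P$, so $\mathcal T(P)\subseteq\mathcal T(P')$. Granting \eqref{eq:base-incl}, an induction on $k$ (with base case $\mathcal T^0(P)=\mathcal S^0(P)=P$) finishes the proof: $\mathcal T^k(P)=\mathcal T(\mathcal T^{k-1}(P))\subseteq\mathcal S(\mathcal T^{k-1}(P))\subseteq\mathcal S(\mathcal S^{k-1}(P))=\mathcal S^k(P)$, and likewise $\mathcal T^k(P)=\mathcal T(\mathcal T^{k-1}(P))\supseteq\mathcal S^2(\mathcal T^{k-1}(P))\supseteq\mathcal S^2(\mathcal S^{2(k-1)}(P))=\mathcal S^{2k}(P)$. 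Unwinding the definitions of $\mathrm{rank}_{\mathcal T}$ and $\mathrm{rank}_{\mathcal S}$ then yields the ``Consequently'' part, namely $\mathrm{rank}_{\mathcal T}(P)\le\mathrm{rank}_{\mathcal S}(P)\le 2\,\mathrm{rank}_{\mathcal T}(P)$.

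For the inclusion $\mathcal T(P)\subseteq\mathcal S(P)$ I would show that every split cut for $P$ is already a two-halfspace cut. Fix $a\in\Z^n$, $K\in\Z$ and a rational valid inequality $\langle c,x\rangle\le\delta$ for $P^{a,K}=\conv(P\cap D_{a,K})$. Write $P=\{x:Ax\le b\}$ and assume both $P\cap\{\langle a,x\rangle\le K\}$ and $P\cap\{\langle a,x\rangle\ge K+1\}$ are nonempty (the remaining cases are immediate). Applying Farkas' lemma to each of the two pieces produces nonnegative rational multipliers and scalars $\nu,\mu\ge0$ for which the rational halfspaces
$$H_1:=\{x:\langle c-\nu a,\,x\rangle\le\delta-\nu K\},\qquad H_2:=\{x:\langle c+\mu a,\,x\rangle\le\delta+\mu(K+1)\}$$
both contain $P$. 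If now $x^*\in(H_1\cap H_2)\cap\Z^n$ and $t:=\langle a,x^*\rangle\in\Z$, then $H_1$ gives $\langle c,x^*\rangle\le\delta+\nu(t-K)\le\delta$ when $t\le K$, and $H_2$ gives $\langle c,x^*\rangle\le\delta+\mu(K+1-t)\le\delta$ when $t\ge K+1$; integrality of $t$ forces one of these. Hence $\langle c,x\rangle\le\delta$ is valid for $(H_1\cap H_2)_I$, so $\mathcal T(P)\subseteq(H_1\cap H_2)_I\subseteq\{\langle c,x\rangle\le\delta\}$, and intersecting over all split cuts gives $\mathcal T(P)\subseteq\mathcal S(P)$.

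The inclusion $\mathcal S^2(P)\subseteq\mathcal T(P)$ is where the work lies. Since $\mathcal T(P)=\bigcap\{(H_1\cap H_2)_I : P\subseteq H_1\cap H_2,\ H_i\text{ rational halfspaces}\}$ and $\mathcal S$ is monotone, it suffices to prove $\mathcal S^2(W)\subseteq W_I$ — equivalently, as $W_I\subseteq\mathcal S^2(W)$ always, that $\mathcal S^2(W)=W_I$ — whenever $W$ is an intersection of two rational halfspaces. Split disjunctions whose normal is not orthogonal to $\lin(W)$ are redundant for $W$ (since $W$ is a cylinder over $\lin(W)$), so by the projection technique used in the proof of Theorem~\ref{thm:2-halfspace-poly} this reduces to a two-dimensional statement: \emph{for every intersection $W'$ of two rational halfplanes in $\R^2$ and every full-rank lattice (which after a change of coordinates we take to be $\Z^2$), one has $\mathcal S^2(W')=W'_I$}. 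When $W'$ is all of $\R^2$, a halfplane, a strip, or empty this is clear (one Chv\'atal-Gomory round suffices), so the only real case is that $W'=v'+C$ is a pointed planar wedge with apex $v'$ and two-dimensional recession cone $C$.

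For that case the plan is to exhibit a single split disjunction $D_{\alpha,K}$ that ``collapses'' the wedge. Because $C$ is full-dimensional, for a suitable primitive $\alpha\in\Z^2$ the open slab $\{K<\langle\alpha,x\rangle<K+1\}$ around $v'$ contains the entire portion of $W'$ lying outside a bounded neighbourhood of $W'_I$; then $W'^{\alpha,K}=\conv(W'\cap D_{\alpha,K})$ is sandwiched in a wedge $W''$ with $W''_I=W'_I$ that is ``flat'' enough that a single further Chv\'atal-Gomory step already produces $W'_I$, so that $\mathcal S^2(W')\subseteq\mathcal S(W'^{\alpha,K})\subseteq\mathcal S(W'')\subseteq\mathcal C(W'')=W'_I$. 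I expect the construction and verification of this collapsing disjunction to be the main obstacle: $\alpha$ must be chosen transverse to \emph{both} extreme rays of $C$, one must quantify how fast $W'$ widens away from the apex relative to the lattice, and one must certify that the collapsed wedge has residual Chv\'atal-Gomory rank $1$ — all while tracking the general lattice produced by the projection in the previous step. Finally, exhibiting one wedge $W$ with $\mathcal S(W)\supsetneq W_I=\mathcal T(W)$ (a thin planar wedge whose apex lies strictly inside a unit slab in every short lattice direction) shows that the factor $2$ in $\mathcal S^{2k}(P)\subseteq\mathcal T^k(P)$ cannot in general be improved.
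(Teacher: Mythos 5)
Your overall architecture is sound, and part of it is genuinely different from the paper: the induction via monotonicity of $\mathcal{S}$ and $\mathcal{T}$ is exactly how the paper iterates the rank-one inclusions, and your Farkas-multiplier argument for $\mathcal{T}(P)\subseteq\mathcal{S}(P)$ (showing every split cut is valid for $(H_1\cap H_2)_I$ for two rational halfspaces $H_1,H_2\supseteq P$) is correct and more direct than the paper's route, which obtains both containments from Lemma~\ref{thm::TP-decomp} together with Corollary~\ref{cor::P1-is-Psplit} (i.e.\ $\mathcal{T}(P)$ equals $\mathcal{S}(P)$ intersected with a family of CG cuts for $\mathcal{S}(P)$). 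Your reduction of $\mathcal{S}^2(P)\subseteq\mathcal{T}(P)$ to the planar statement ``$\mathcal{S}^2(W')=W'_I$ for every intersection $W'$ of two rational halfplanes, with respect to a general lattice'' is also legitimate, and it only needs the lifting direction (Proposition~\ref{prop::pull-back}) plus Proposition~\ref{prop::IHtwo}, so it would even bypass Theorem~\ref{thm:split-projs}. But that planar statement is precisely the content of Theorem~\ref{thm:rank-IH} ($C_I=\mathcal{C}(\mathcal{S}(C))$ for a planar wedge, hence $\mathcal{S}^2(C)\subseteq\mathcal{C}(\mathcal{S}(C))=C_I$), and you do not prove it; this is the genuine gap.

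The sketched ``single collapsing split'' plan does not close it. First, the geometric claim is ill-posed: for a pointed wedge $W'=v'+C$ with full-dimensional $C$, the region $W'\setminus W'_I$ is in general not concentrated near the apex --- it contains two half-strips running along the two extreme rays of $C$, which point in linearly independent directions, so no unit-width slab $\{K<\langle\alpha,x\rangle<K+1\}$ can contain them; and if ``bounded neighbourhood of $W'_I$'' instead means a constant-width fattening of $W'_I$, then the portion of $W'$ outside it is empty and the condition carries no information. Second, even granting a split that cuts deep past the apex, the real burden is to certify that \emph{one} CG round applied to $\conv(W'\setminus\intt(S))$ (or to your sandwiching wedge $W''$) already yields every facet-defining inequality of $W'_I$; you acknowledge this but give no argument, and it is doubtful that a single disjunction suffices in general. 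The paper's proof of Theorem~\ref{thm:rank-IH} avoids this by using a \emph{different} split for each facet $F^i$ of the integer hull, chosen so that the split closure lands inside $\widehat{H^i}\setminus\widehat{\ell^i}$, which makes $H^i$ a CG cut of $\mathcal{S}(C)$; i.e.\ it needs the full split closure before the CG round, not one split. Until you either substantiate the single-split claim or replace it with such a facet-by-facet argument, the inclusion $\mathcal{S}^{2}(P)\subseteq\mathcal{T}(P)$, and with it the corollary, is not established. (The final remark about tightness of the factor $2$ is not needed for the statement.)
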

$$
    \frac{1}{2}\mbox{rank}_{\mathcal{S}}(P)\leq \mbox{rank}_{\mathcal{T}}(P)\leq \mbox{rank}_{\mathcal{S}}(P). 
$$

Moreover, in Section~\ref{sec:strict-cont} we give an example showing that the containment $\mathcal{T}(P) \subseteq \mathcal{S}(P)$ can be strict.

\paragraph{Multi-row cuts.} In the past 15 years or so, there has been a lot of research in the area of {\it multi-row cuts} where the general idea is similar to the philosophy of this paper. One wishes to derive valid inequalities for mixed-integer sets in the form $\min\{x \in \Z^n_+ \times \R^d_+: Ax = b\}$. Then one takes a relaxation by considering two or more aggregated constraints from $Ax = b$, along with the nonnegativity constraints, and attempts to construct the (mixed)-integer hull of these relaxations. This approach has a computational advantage because the aggregations are taken from the optimal simplex tableaux and the valid inequalities for these relaxations can be derived from simple formulas that exploit gauge and support function duality or use the Gomory-Johnson approach of subadditive functions; see~\cite{basu2015geometric} for a survey. Several polyhedrality results have been proven for this and related settings~\cite{dash2016polyhedrality,dash2019lattice,dash2017polyhedrality,MR2969261,bhk2}, but we do not see an immediate connection with the polyhedrality result of this paper.

\section{Proofs of Theorem~\ref{thm:2-halfspace-poly} and Corollary~\ref{cor:split-2-half}}

We begin with some preliminary definitions and simple observations.

\begin{definition} A linear subspace $L \subseteq \R^n$ is called a {\it lattice subspace} if $\conv(L\cap \Z^n) = L$. For any $X \subseteq L$, $\intt_L(X)$ will denote the interior of $X$ with respect to the relative topology of $L$. Given a lattice subspace $L$, $\Lambda_L$ will denote the lattice in $L$ obtained by the orthogonal projection of $\Z^n$ onto $L$. A subset of $L$ of the form $H \cap L$, where $H\subseteq \R^n$ is a halfspace, will be called a {\it halfspace in $L$} (note that this definition allows $L$ itself to be a halfspace in $L$). \end{definition}
\begin{definition} Let $L$ be a lattice subspace of $\R^n$. Let $P \subseteq L$ be a polyhedron. A halfspace $H$ in $L$ is a {\it Chv\'atal-Gomory (CG) cut for $P$ with respect to $\Lambda_L$} if there exists another halfspace $H'$ in $L$ such that $P\subseteq H'$ and $H = \conv(H' \cap \Lambda_L)$.


A {\it split set in $L$ with respect to $\Lambda_L$} is a subset $S \subseteq L$ such that $\intt_L(S) \cap \Lambda_L = \emptyset$, $S$ is the intersection of two halfspaces in $L$, and the dimension of $S$ is the same as the dimension of $L$. In other words,  it is a proper subset of $L$ that is the intersection of $L$ with some split set in $\R^n$. A halfspace $H$ in $L$ such that $P \setminus \intt_L(S) \subseteq H$ is called a {\it split cut for $P$ with respect to $\Lambda_L$}. \end{definition}

\begin{definition} For any polyhedron $P$ in  $\R^n$ and lattice subspace $L\subseteq \R^n$, 
$P_L$ will denote the image of $P$ orthogonally projected from $\R^n$ to $L$,  $\lin(P)$ will denote the lineality space of $P$, and $L^{\perp}$ will denote the orthogonal complement of $L$. Define 
\begin{align*}
    P_{I,L}&=\conv(P_L\cap \Lambda_L)\\
    \mathcal{S}_L(P) &= \bigcap_{\substack{\mbox{ split set }S \mbox{ on } L\\ \mbox{ with respect to }\Lambda_L}}\conv(P_L\backslash {\intt}_L(S)),\\
    \mathcal{C}_L(P) &= \bigcap_{\substack{\mbox{ CG cut }H \mbox{ for } P_L\\ \mbox{ with respect to }\Lambda_L}}H,
\end{align*}
 which will be called the {\it lattice hull, split closure} and {\it CG closure of $P$ in $L$ with respect to $\Lambda_L$}, respectively. With a little bit of abuse of notation, we let $P_I=P_{I,\R^n}$, $\mathcal{S}(P)$ denote $\mathcal{S}_{\R^n}(P)$, and $\mathcal{C}(P)$ denote $\mathcal{C}_{\R^n}(P)$. 
\end{definition}



\begin{prop}\label{prop::IHtwo}
Let $H_1, H_2$ be two rational halfspaces in $\R^n$ such that $H_1 \cap H_2$ is not $\emptyset$ or $\R^n$. Let $L$ be the linear subspace $\lin(H_1\cap H_2)^\perp$; thus $L$ is two dimensional if $H_1$ and $H_2$ are not parallel and $L$ is one-dimensional otherwise. Then $(H_1\cap H_2)_I=(H_1\cap H_2)_{I,L}+L^\perp = (H_1\cap H_2)_{I,L}+\lin(H_1 \cap H_2)$.
\end{prop}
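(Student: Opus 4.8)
Write $Q:=H_1\cap H_2$. Since $L^{\perp}=\lin(Q)$ holds by the very definition $L=\lin(Q)^{\perp}$, the second equality in the statement is immediate, and the whole content is the set identity
\[
  \conv(Q\cap\Z^n)\;=\;\conv\!\big(\pi(Q)\cap\Lambda_L\big)+L^{\perp},
\]
where $\pi\colon\R^n\to L$ is the orthogonal projection (so $Q_L=\pi(Q)$); this is exactly $Q_I=Q_{I,L}+L^{\perp}$. The plan is to prove both inclusions directly, after recording two preliminary facts. First, because $H_1,H_2$ are rational, $\lin(Q)$ is a rational subspace, hence so is $L$; thus $L$ is a lattice subspace, $\Lambda_L=\pi(\Z^n)$ is a genuine lattice of full rank in $L$, and $L^{\perp}\cap\Z^n$ is a lattice of full rank in $L^{\perp}$. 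Second, $Q=\pi(Q)+L^{\perp}$, and in particular $\pi(Q)\subseteq Q$: for $x\in Q$ we have $x-\pi(x)\in L^{\perp}=\lin(Q)$, so $\pi(x)=x-(x-\pi(x))\in Q$.

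The inclusion ``$\subseteq$'' is the easy one. Since the right-hand side is convex, it suffices to show $Q\cap\Z^n\subseteq\conv(\pi(Q)\cap\Lambda_L)+L^{\perp}$. Given $z\in Q\cap\Z^n$, the point $\pi(z)$ lies in $\pi(Q)$ and in $\pi(\Z^n)=\Lambda_L$, while $z-\pi(z)\in L^{\perp}$; hence $z=\pi(z)+(z-\pi(z))$ lies in $(\pi(Q)\cap\Lambda_L)+L^{\perp}$.

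For ``$\supseteq$'', using the elementary identity $\conv(A)+L^{\perp}=\conv(A+L^{\perp})$ it is enough to show $p+L^{\perp}\subseteq\conv(Q\cap\Z^n)$ for each $p\in\pi(Q)\cap\Lambda_L$. Fix such a $p$; since $p\in\Lambda_L=\pi(\Z^n)$, choose $z\in\Z^n$ with $\pi(z)=p$. Then $z-p\in L^{\perp}=\lin(Q)$ and $p\in\pi(Q)\subseteq Q$, so $z\in Q\cap\Z^n$. For every $v\in L^{\perp}\cap\Z^n$ we likewise have $z+v\in\Z^n$ and $z+v\in Q$ (as $v\in\lin(Q)$), so $z+(L^{\perp}\cap\Z^n)\subseteq Q\cap\Z^n$. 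Because $L^{\perp}$ is rational, $L^{\perp}\cap\Z^n$ has full rank in $L^{\perp}$, hence $\conv\!\big(z+(L^{\perp}\cap\Z^n)\big)=z+L^{\perp}=p+L^{\perp}$, and this sits inside $\conv(Q\cap\Z^n)$; taking convex hulls over all such $p$ and using convexity of $\conv(Q\cap\Z^n)$ completes the inclusion.

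The only step requiring genuine care is this last one: one must observe that the entire affine flat $p+L^{\perp}$ — not merely the single lifted integer point $z$ — lies in $\conv(Q\cap\Z^n)$, and this is precisely where the rationality of $\lin(Q)$ (so that $L^{\perp}\cap\Z^n$ spans $L^{\perp}$) and the fact that $L^{\perp}$ equals $\lin(Q)$ (so that translating integer points of $Q$ by $L^{\perp}\cap\Z^n$ keeps them in $Q$) are both used; everything else is bookkeeping with the orthogonal splitting $\R^n=L\oplus L^{\perp}$. I would also remark that the hypothesis that $Q$ is an intersection of two halfspaces is never invoked: the identity $Q_I=Q_{I,\lin(Q)^{\perp}}+\lin(Q)$ holds verbatim for every rational polyhedron $Q$, with $\lin(Q)^{\perp}$ of arbitrary dimension; the two-halfspace hypothesis only guarantees $\dim L\in\{1,2\}$, which is what makes the result useful downstream.
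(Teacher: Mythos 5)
Your proof is correct and follows essentially the same route as the paper: project integer points for the easy inclusion, then lift each $p\in Q_L\cap\Lambda_L$ to an integer point of $Q$ using $L^\perp=\lin(Q)$ and show the whole fiber $p+L^\perp$ lies in $Q_I$. The only (harmless) difference is that where the paper simply invokes the standard fact that the integer hull of the rational set $H_1\cap H_2$ has lineality space $L^\perp$, you justify the fiber containment directly via the full-rank lattice $L^\perp\cap\Z^n$, which also supports your correct remark that the statement holds for any rational polyhedron, not just two-halfspace intersections.
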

\begin{proof}
For $z\in (H_1\cap H_2)\cap \Z^n$, by definition $z\in(H_1\cap H_2)_{I,L}+L^\perp $, and so $(H_1\cap H_2)_I\subseteq (H_1\cap H_2)_{I,L}+L^\perp$. 

For $z'\in (H_1\cap H_2)_{L}\cap \Lambda_L$, there exists $z_1\in \Z^n$, s.t. $z'$ is the projection of $z_1$. In other words, there exists $v_1 \in L^\perp$ such that $z' = z_1 + v_1$. Similarly, there exist $z_2 \in H_1 \cap H_2$ and $v_2 \in L^\perp$ such that $z ' = z_2 + v_2$. Therefore, $z_1 = z_2 + v_2 - v_1$ and so $z_1 \in H_1 \cap H_2$ as well. This shows that $z_1 \in (H_1 \cap H_2) \cap \Z^n$. Finally, observe that $z' + L^\perp = z_1 + L^\perp$ since $z' - z_1 \in L^\perp$. 

Since $H_1, H_2$ are both rational halfspaces, $(H_1 \cap H_2)_I$ has the same lineality space $L^\perp$ as $H_1\cap H_2$. Therefore, since $z_1 \in (H_1 \cap H_2) \cap \Z^n$, we must have $z_1 + L^\perp \subseteq (H_1 \cap H_2)_I$ and therefore $z' + L^\perp \subseteq (H_1 \cap H_2)_I$. 

Thus, we conclude that for any $z' \in (H_1\cap H_2)_{L}\cap \Lambda_L$, $z' + L^\perp \subseteq (H_1 \cap H_2)_I$. Since $(H_1 \cap H_2)_I$ is convex and has lineality space $L^\perp$, this shows that $(H_1\cap H_2)_{I,L}+L^\perp \subseteq (H_1 \cap H_2)_I$.\qed
%
%
%
\end{proof}

\begin{prop}\label{prop::pull-back}
Let $P\subseteq \R^n$ be a polyhedron and let $L$ be a lattice subspace of $\R^n$. Then the following are all true.
\begin{enumerate}
   \item Let $H$ be a halfspace in $L$. Then $P_L\subseteq H$ implies $P\subseteq H+L^\perp$.\label{prop::pull-back-valid}
   
    \item Let $H$ be a halfspace in $L$. Then if $P_L\cap \Lambda_L\subseteq H$ we have $P\cap \Z^n\subseteq H+L^\perp$. \label{prop::pull-back-cut}
    
     \item Let $S$ be a split set in $L$ with respect to $\Lambda_L$, then $S+L^\perp$ is a split set in $\R^n$. Consequently, if $H$ is a split cut for $P_L$ with respect to $\Lambda_L$ then $H + L^\perp$ is a split cut for $P$. \label{prop::pull-back-split}
     
    \item If $H$ is a CG cut for $P_L$ with respect to $\Lambda_L$, then $H+L^\perp$ is a CG cut for $P$ with respect to $\Z^n$. \label{prop::pull-back-CG}
\end{enumerate}
\end{prop}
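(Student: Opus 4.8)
\textbf{Proof plan for Proposition~\ref{prop::pull-back}.}

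The plan is to prove each of the four items by the same basic mechanism: a point's membership in a set of the form $H + L^\perp$ (or $S + L^\perp$) depends only on its orthogonal projection onto $L$, so each statement pulls back through the projection map $\pi_L : \R^n \to L$. First I would record the elementary fact that for any halfspace $H$ in $L$ and any $x \in \R^n$, we have $x \in H + L^\perp$ if and only if $\pi_L(x) \in H$; this is immediate because $H + L^\perp$ is exactly $\pi_L^{-1}(H)$ (writing $x = \pi_L(x) + x^\perp$ with $x^\perp \in L^\perp$, the $L^\perp$-component can be absorbed). The same remark applies verbatim with $H$ replaced by the split set $S$ in $L$, giving $S + L^\perp = \pi_L^{-1}(S)$, and, since $\intt_L(S) + L^\perp = \pi_L^{-1}(\intt_L(S))$, the interior of $S + L^\perp$ in $\R^n$ equals $\pi_L^{-1}(\intt_L(S))$.

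For item~\ref{prop::pull-back-valid}: if $P_L \subseteq H$, then for every $x \in P$ we have $\pi_L(x) \in P_L \subseteq H$, hence $x \in \pi_L^{-1}(H) = H + L^\perp$. For item~\ref{prop::pull-back-cut}: if $x \in P \cap \Z^n$, then $\pi_L(x) \in \Lambda_L$ by definition of $\Lambda_L$ as the projection of $\Z^n$, and also $\pi_L(x) \in P_L$, so $\pi_L(x) \in P_L \cap \Lambda_L \subseteq H$, whence $x \in H + L^\perp$. For item~\ref{prop::pull-back-split}: since $L$ is a lattice subspace, $\conv(L \cap \Z^n) = L$, and I would first check that $\intt(S + L^\perp) \cap \Z^n = \emptyset$ — indeed if $z \in \Z^n$ lay in $\pi_L^{-1}(\intt_L(S))$ then $\pi_L(z) \in \Lambda_L \cap \intt_L(S) = \emptyset$, a contradiction. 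Combined with the facts that $S + L^\perp$ is the intersection of two halfspaces in $\R^n$ (each halfspace in $L$ pulls back to a halfspace in $\R^n$) and that it is a proper full-dimensional subset of $\R^n$ (its projection $S$ is proper and full-dimensional in $L$, and the $L^\perp$ directions are unbounded both ways), this shows $S + L^\perp$ is a split set in $\R^n$. Then if $H$ is a split cut for $P_L$ with respect to $\Lambda_L$, i.e. $P_L \setminus \intt_L(S) \subseteq H$, applying $\pi_L^{-1}$ and using $\pi_L(P) = P_L$ together with $\pi_L^{-1}(P_L \setminus \intt_L(S)) \supseteq P \setminus \pi_L^{-1}(\intt_L(S)) = P \setminus \intt(S + L^\perp)$ gives $P \setminus \intt(S + L^\perp) \subseteq H + L^\perp$, so $H + L^\perp$ is a split cut for $P$. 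For item~\ref{prop::pull-back-CG}: let $H = \conv(H' \cap \Lambda_L)$ for a halfspace $H'$ in $L$ with $P_L \subseteq H'$. By item~\ref{prop::pull-back-valid}, $P \subseteq H' + L^\perp$, and $H' + L^\perp$ is a rational halfspace in $\R^n$; I would then verify $\conv((H' + L^\perp) \cap \Z^n) = \conv(H' \cap \Lambda_L) + L^\perp = H + L^\perp$, using that $L$ is a lattice subspace so that projecting $\Z^n$ onto $L$ (which commutes with intersecting against a set pulled back from $L$, up to the lineality directions) produces exactly $\Lambda_L$-points, and that the CG cut of a halfspace with lineality space containing $L^\perp$ retains that lineality space. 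Hence $H + L^\perp$ is a CG cut for $P$ with respect to $\Z^n$.

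I do not expect any serious obstacle here; the only points requiring a little care are (a) the direction-by-direction bookkeeping in item~\ref{prop::pull-back-split} showing that $S + L^\perp$ genuinely meets the full definition of a split set in $\R^n$ — in particular full-dimensionality and properness — and (b) in item~\ref{prop::pull-back-CG}, justifying that the integer hull of the pulled-back halfspace $H' + L^\perp$ is exactly the pullback of the lattice-$\Lambda_L$ hull of $H'$, which is where the lattice-subspace hypothesis on $L$ is essential (it guarantees no "extra" integrality is lost or gained in the $L^\perp$ directions, mirroring the argument already used in the proof of Proposition~\ref{prop::IHtwo}).
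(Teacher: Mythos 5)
Your proposal is correct. For parts~\ref{prop::pull-back-valid}--\ref{prop::pull-back-split} it is essentially the paper's own argument: membership in $H+L^\perp$ (or $S+L^\perp$) depends only on the orthogonal projection onto $L$, so validity, integrality and the split-set property all pull back through $\pi_L^{-1}$; your extra care with $\intt(S+L^\perp)=\intt_L(S)+L^\perp$ and with full-dimensionality/properness only makes explicit what the paper leaves implicit. The one place you genuinely diverge is part~\ref{prop::pull-back-CG}. The paper takes the split disjunction $D^{a,K}=H_1\cup H_2$ in $L$ that derives the CG cut, pulls it back via part~\ref{prop::pull-back-split}, and concludes from $P\cap\Z^n\subseteq H_1+L^\perp$ and $P\cap(H_2+L^\perp)=\emptyset$ that $H+L^\perp$ is a CG cut for $P$; this reuses parts 1--3 but leans implicitly on the equivalence between CG cuts and cuts from one-sided split disjunctions. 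You instead verify the definition of a CG cut directly: with $H=\conv(H'\cap\Lambda_L)$ and $P_L\subseteq H'$, you take the base halfspace $H'+L^\perp\supseteq P$ and prove $\conv\bigl((H'+L^\perp)\cap\Z^n\bigr)=\conv(H'\cap\Lambda_L)+L^\perp=H+L^\perp$, using that $L$ (hence $L^\perp$) is a lattice subspace — the same cylinder/lineality argument as in Proposition~\ref{prop::IHtwo}. Your route is a bit more self-contained and makes transparent exactly where the lattice-subspace hypothesis enters; the paper's route is shorter on the page because it recycles the earlier parts and the disjunctive viewpoint. Either way the statement is fully established, so there is no gap to repair.
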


\begin{proof}
\begin{enumerate}
 \item This is because $P\subseteq (P_L+L^\perp)\subseteq  H+L^\perp$.
  \item Given a halfspace $H$ in $L$, suppose $P_L\cap \Lambda_L \subseteq H$. Since $P\cap \Z^n$ projects to $P_L\cap \Lambda_L $, we have $P\cap \Z^n \subseteq (P_L \cap \Lambda_L) + L^\perp \subseteq H+L^\perp$.
    
    \item For an integer point $z\in \Z^n$, it is orthogonally projected onto some lattice point $z'\in \Lambda_L$. Then $z'\notin S$ implies that $(z'+L^\perp)\cap (S+L^\perp) = \emptyset$. Since $z\in z'+L^\perp$, so we obtain $z\notin S+L^\perp$, and thus $S+L^\perp$ is a split set (we use the fact that for any halfspace $H \subseteq L$, $H+ L^\perp$ is a halfspace in $\R^n$). Then by part \ref{prop::pull-back-valid}, we can prove the rest of part \ref{prop::pull-back-split}.
   
    \item Since $H$ is a CG cut for $P_L$ with respect to $\Lambda_L$, we have $P\cap \Z^n\subseteq H+L^\perp$ by part \ref{prop::pull-back-cut}. Let $D^{a,K}$ be the split disjunction in $L$ that derives the CG cut $H$. By part~\ref{prop::pull-back-split}, $D^{a,K}+L^\perp$ is a split disjunction in $\R^n$. Let $H_1$ and $H_2$ be the two halfspaces  in $L$ such that $D^{a,K}=H_1\cup H_2$. Since $H$ is a CG cut, we can assume $P_L\cap \Lambda_L\subseteq H_1=H$, and $P_L\cap H_2=\emptyset$. Then $P\cap \Z^n\subseteq H_1+L^\perp=H+L^\perp$ by part \ref{prop::pull-back-cut}, and $P \cap (H_2 + L^\perp) = \emptyset$ by part \ref{prop::pull-back-valid}, which finishes the proof.\qed \end{enumerate}\end{proof}

\begin{definition} For $k \in \{1, \ldots, n\}$, let $\mathcal{L}_k$ denote the set of all $k$ dimensional lattice subspaces in $\R^n$.
\end{definition}

The next result says that the two-halfspace closure can be obtained by intersecting all Chv\'atal-Gomory cuts for the split closures of the two-dimensional projections of a polyhedron. This essentially follows from the fact that the integer hull of a two-dimensional simplicial cone can be obtained by taking the split closure of the simplicial cone, and then taking the Chv\'atal-Gomory closure. This fact was first observed in~\cite{basu-split-2D} but we include a proof for completeness in the Appendix; see Theorem~\ref{thm:rank-IH}.

\begin{lemma}\label{thm::TP-decomp}
Given a polyhedron $P\subseteq \R^n$, for $L\in \mathcal{L}_2$, we define $$\mathcal{K}(P,L)=\{H \mbox{ is a halfspace in } \R^n: P\subseteq H,~ L^\perp\subseteq \lin(H)\}.$$ 
For $H_1,H_2\in \mathcal{K}(P,L)$, we define
\begin{align}
    \mathcal{G}_{L,H_1,H_2}=&\{H \subseteq \R^n: H=H'+L^\perp \nonumber\\&\mbox{ for } H' \mbox{ which is a CG cut for } \mathcal{S}_L(H_1\cap H_2) \mbox{ in } L\}.
\end{align}
Let $\mathcal{G}_L=\bigcup_{H_1,H_2\in\mathcal{K}(P,L) }\mathcal{G}_{L,H_1,H_2}$. 
Let $\mathcal{G} = \cup_{L\in \mathcal{L}_2} \mathcal{G}_L$. Let $P^1:=\bigcap_{L\in\mathcal{L}_2}(\mathcal{S}_L(P)+L^\perp)$. Then we have 
\begin{equation}
    \mathcal{T}(P)=\bigcap_{H\in \mathcal{G}}P^1\cap H.\label{eq::add-CG}
\end{equation}
Moreover, each $H\in \mathcal{G}$ is a CG cut for $P^1$.
\end{lemma}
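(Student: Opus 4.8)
The plan is to prove the displayed equation by the two inclusions — $P^1$ entering only at the end — and to deduce the final assertion from part~\ref{prop::pull-back-CG} of Proposition~\ref{prop::pull-back}. Throughout I take all halfspaces to be rational, as in the definition of $\mathcal{T}(P)$. The crux is a local identity for a single pair of halfspaces: if $H_1,H_2$ are halfspaces with $P\subseteq H_1\cap H_2$ and $\emptyset\neq H_1\cap H_2\neq\R^n$, then, since $\lin(H_1\cap H_2)=\lin(H_1)\cap\lin(H_2)$ is a rational subspace of dimension at least $n-2$, we may choose $L\in\mathcal{L}_2$ with $L^\perp\subseteq\lin(H_1\cap H_2)$; for such $L$ we have $H_1,H_2\in\mathcal{K}(P,L)$, $H_1\cap H_2=(H_1\cap H_2)_L+L^\perp$, and
$$(H_1\cap H_2)_I=\mathcal{C}_L\bigl(\mathcal{S}_L(H_1\cap H_2)\bigr)+L^\perp=\bigcap_{H\in\mathcal{G}_{L,H_1,H_2}}H.$$
The first equality combines Proposition~\ref{prop::IHtwo} (whose proof extends verbatim to any lattice subspace $L$ with $L^\perp\subseteq\lin(H_1\cap H_2)$, since it only uses $L^\perp\subseteq\lin\bigl((H_1\cap H_2)_I\bigr)=\lin(H_1\cap H_2)$), giving $(H_1\cap H_2)_I=(H_1\cap H_2)_{I,L}+L^\perp$, with Theorem~\ref{thm:rank-IH}, which gives $(H_1\cap H_2)_{I,L}=\mathcal{C}_L\bigl(\mathcal{S}_L(H_1\cap H_2)\bigr)$ when $(H_1\cap H_2)_L$ is a translated $2$-dimensional simplicial cone, i.e.\ when $H_1,H_2$ are not parallel; the parallel case, where $(H_1\cap H_2)_L$ is a halfspace or slab in $L$, follows from elementary facts about split and CG closures in dimension $\le 2$. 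The second equality is immediate from the description of $\mathcal{G}_{L,H_1,H_2}$ and the fact that, for subsets of $L$, intersection commutes with adding $L^\perp$.

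From this local identity the two inclusions between $\bigcap_{H\in\mathcal{G}}H$ and $\mathcal{T}(P)$ are short. If $H\in\mathcal{G}_{L,H_1,H_2}$ then $P\subseteq H_1\cap H_2$, so $\mathcal{T}(P)\subseteq(H_1\cap H_2)_I\subseteq H$ (trivially if $H_1\cap H_2=\R^n$); hence $\mathcal{T}(P)\subseteq\bigcap_{H\in\mathcal{G}}H$. Conversely, for any rational halfspaces $H_1,H_2$ with $P\subseteq H_1\cap H_2$, picking $L$ as above gives $\bigcap_{H\in\mathcal{G}}H\subseteq\bigcap_{H\in\mathcal{G}_{L,H_1,H_2}}H=(H_1\cap H_2)_I$ (again trivially if $H_1\cap H_2=\R^n$); intersecting over all such pairs, $\bigcap_{H\in\mathcal{G}}H\subseteq\mathcal{T}(P)$. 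Thus $\bigcap_{H\in\mathcal{G}}H=\mathcal{T}(P)$, so $\bigcap_{H\in\mathcal{G}}(P^1\cap H)=P^1\cap\mathcal{T}(P)$, and it remains to prove $\mathcal{T}(P)\subseteq P^1$.

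For $\mathcal{T}(P)\subseteq P^1$, fix $L\in\mathcal{L}_2$ and a split set $S$ in $L$; by part~\ref{prop::pull-back-split} of Proposition~\ref{prop::pull-back}, $S+L^\perp=\{x:K\le\langle c,x\rangle\le K+1\}$ for some primitive $c\in\Z^n$ with $c\in L$ and some $K\in\Z$. An elementary projection computation gives $\conv\bigl(P\setminus\intt(S+L^\perp)\bigr)\subseteq\conv\bigl(P_L\setminus\intt_L(S)\bigr)+L^\perp$, so it suffices to show $\mathcal{T}(P)\subseteq\conv\bigl(P\setminus\intt(S+L^\perp)\bigr)=\conv(P\cap D_{c,K})$. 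Write $P=\{x:Ax\le b\}$ with $A,b$ rational, and let $\langle w,x\rangle\le\delta$ be any rational valid inequality of the rational polyhedron $\conv(P\cap D_{c,K})$. Assuming both $P\cap\{x:\langle c,x\rangle\le K\}$ and $P\cap\{x:\langle c,x\rangle\ge K+1\}$ are nonempty (otherwise $\conv(P\cap D_{c,K})=P$ and there is nothing to prove), LP duality applied over each piece yields nonnegative rational $\lambda,\lambda',\mu,\mu'$ with $A^{T}\lambda+\mu c=w$, $b^{T}\lambda+\mu K\le\delta$, $A^{T}\lambda'-\mu'c=w$, $b^{T}\lambda'-\mu'(K+1)\le\delta$. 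Set $\hat H_1=\{x:\langle A^{T}\lambda,x\rangle\le b^{T}\lambda\}$ and $\hat H_2=\{x:\langle A^{T}\lambda',x\rangle\le b^{T}\lambda'\}$, both rational halfspaces containing $P$. For $z\in(\hat H_1\cap\hat H_2)\cap\Z^n$ one has $\langle c,z\rangle\in\Z$, and splitting on $\langle c,z\rangle\le K$ versus $\langle c,z\rangle\ge K+1$ and using the appropriate one of $\hat H_1,\hat H_2$ gives $\langle w,z\rangle\le\delta$; hence $(\hat H_1\cap\hat H_2)_I\subseteq\{x:\langle w,x\rangle\le\delta\}$, so $\mathcal{T}(P)\subseteq\{x:\langle w,x\rangle\le\delta\}$ when $\hat H_1\cap\hat H_2\neq\R^n$. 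The degenerate cases (some $\hat H_i=\R^n$) reduce to a CG cut for $P$ and use $\mathcal{T}(P)\subseteq\mathcal{C}(P)$. Intersecting over all $w,\delta$, all split sets $S$, and all $L\in\mathcal{L}_2$ gives $\mathcal{T}(P)\subseteq P^1$, and hence $\mathcal{T}(P)=P^1\cap\mathcal{T}(P)=\bigcap_{H\in\mathcal{G}}(P^1\cap H)$. For the final assertion: if $H=H'+L^\perp\in\mathcal{G}_{L,H_1,H_2}$ with $H_1,H_2\in\mathcal{K}(P,L)$, then $P\subseteq H_1\cap H_2$ gives $\mathcal{S}_L(P)\subseteq\mathcal{S}_L(H_1\cap H_2)$ by monotonicity, and since $(P^1)_L\subseteq\mathcal{S}_L(P)$ (as $P^1\subseteq\mathcal{S}_L(P)+L^\perp$), the outer halfspace defining the CG cut $H'$ contains $(P^1)_L$, so $H'$ is a CG cut for $(P^1)_L$; part~\ref{prop::pull-back-CG} of Proposition~\ref{prop::pull-back}, applied to $P^1$, then shows $H=H'+L^\perp$ is a CG cut for $P^1$ with respect to $\Z^n$.

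The step I expect to be the main obstacle is the inclusion $\mathcal{T}(P)\subseteq P^1$: everything else is projection bookkeeping built on Propositions~\ref{prop::IHtwo} and~\ref{prop::pull-back} and on the black-box Theorem~\ref{thm:rank-IH}, but realizing an arbitrary split cut as a two-halfspace cut through Farkas multipliers — together with the careful treatment of the degenerate multiplier configurations and of parallel pairs $(H_1,H_2)$, where $(H_1\cap H_2)_L$ is a slab rather than a simplicial cone — is where genuine care is required.
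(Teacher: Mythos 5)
Your proof is correct, and while its first half mirrors the paper's argument, the decisive step is handled by a genuinely different route. Like the paper, you reduce each pair $H_1,H_2$ to a two-dimensional statement via Proposition~\ref{prop::IHtwo} (suitably generalized to any $L\in\mathcal{L}_2$ with $L^\perp\subseteq\lin(H_1\cap H_2)$) and Theorem~\ref{thm:rank-IH}, obtaining $(H_1\cap H_2)_I=\mathcal{C}_L(\mathcal{S}_L(H_1\cap H_2))+L^\perp$; this is exactly the paper's chain \eqref{eq::all-IHtwo}--\eqref{eq:def-G}, and your explicit treatment of parallel pairs (slabs/halfspaces in $L$, where CG cuts alone already give the lattice hull) is a point the paper passes over silently. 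The divergence is in how the term $\mathcal{S}_L(P)$, i.e.\ $P^1$, enters: the paper invokes the known result of Andersen--Cornu\'ejols--Li \cite{andersen2005split} that the intersection of the split closures of all simplicial cones containing a rational polyhedron equals its split closure, which turns $\bigcap_{H_1,H_2\in\mathcal{K}(P,L)}\mathcal{S}_L(H_1\cap H_2)$ into $\mathcal{S}_L(P)$ as an identity; you instead prove only the inclusion you actually need, $\mathcal{T}(P)\subseteq P^1$, directly, by taking an arbitrary split cut, applying LP duality on each side of the disjunction, and aggregating the multipliers into two rational halfspaces $\hat H_1,\hat H_2\supseteq P$ whose integer hull already implies the cut. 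I checked the multiplier computation and it is sound (integrality of $\langle c,z\rangle$ on $\Z^n$ is all that is used), and the degenerate configurations you flag (some $A^T\lambda=0$, reducing to a CG cut via a convex-combination bound on the rounded right-hand side) do work out, though you only sketch them. What each approach buys: the paper's citation makes the proof of the lemma very short and gives the stronger per-$L$ identity, while your Farkas argument is self-contained --- it in effect reproves the half of \cite{andersen2005split} that the lemma needs, namely that every split cut is dominated by a two-halfspace cut --- at the cost of some case analysis. Your derivation of the final assertion (monotonicity of $\mathcal{S}_L$ plus Proposition~\ref{prop::pull-back} part~\ref{prop::pull-back-CG}) is essentially the paper's.
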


\begin{proof}
We have 
\begin{align}
    \mathcal{T}(P)&=\bigcap_{L\in\mathcal{L}_2}\bigcap_{~H_1,H_2\in\mathcal{K}(P,L) }   \big((H_1\cap H_2)_{I,L}+L^\perp \big)\label{eq::all-IHtwo}\\
    &=\bigcap_{L\in\mathcal{L}_2}\bigcap_{~H_1,H_2\in\mathcal{K}(P,L) }   \big(\mathcal{C}_L(\mathcal{S}_L(H_1\cap H_2))+L^\perp \big)\label{eq::2D-rank}\\
    &=\bigcap_{L\in\mathcal{L}_2}\bigcap_{~H_1,H_2\in\mathcal{K}(P,L) }   \left((\mathcal{S}_L(H_1\cap H_2)+L^\perp \big)\bigcap_{H\in \mathcal{G}_{L,H_1,H_2}}H\right)\label{eq:def-G}\\
    &=\bigcap_{L\in\mathcal{L}_2}  \left(\left(\left(\bigcap_{~H_1,H_2\in\mathcal{K}(P,L) } \mathcal{S}_L(H_1\cap H_2)\right)+L^\perp \right)\right.\nonumber\\
    &~~\left.\bigcap_{~H_1,H_2\in\mathcal{K}(P,L) }\bigcap_{H\in \mathcal{G}_{L,H_1,H_2}}H\right)\label{eq:dist-cap}\\
    &=\bigcap_{L\in\mathcal{L}_2}  \left(\left( \mathcal{S}_L(P)+L^\perp \right)\bigcap_{H\in \mathcal{G}_{L}}H\right)\label{eq::intersection-split}\\
    &=\bigcap_{H\in \mathcal{G}}P^1\cap H\label{eq:final}
\end{align}


Equation (\ref{eq::all-IHtwo}) is due to Proposition \ref{prop::IHtwo}.

Equation (\ref{eq::2D-rank}) is based on the fact that the integer hull of a simplicial cone $Q\in \R^2$ can be derived by taking split closure of $Q$, and then taking the CG closure of this split closure~\cite{basu-split-2D}. We include a proof of this fact for completeness in Theorem~\ref{thm:rank-IH} in the Appendix.

Equation~\eqref{eq:def-G} follows from the definition of $\mathcal{G}_{L,H_1,H_2}$ and Equation~\eqref{eq:dist-cap} simply distributes the intersection operator.

For equation (\ref{eq::intersection-split}), consider $L\in \mathcal{L}_2$, let $H_1'$ and $H_2'$ be two halfspaces in $L$ that define a simplicial cone containing $P_L$. Then by  Proposition \ref{prop::pull-back},  $H_1'+L^\perp, H_2'+L^\perp\in \mathcal{K}(P,L)$. It is well known that the intersection of the split closures of all the simplicial cones containing a rational polyhedron $Q$ is the split closure of $Q$ (see \cite{andersen2005split}). Thus, we have $$\bigcap_{~H_1,H_2\in \mathcal{K}(P,L)} \mathcal{S}_L(H_1\cap H_2)=\mathcal{S}_L(P),$$ which finally leads to Equation (\ref{eq::intersection-split}). Distributing the intersection over $L \in \mathcal{L}_2$ yields equation~\eqref{eq:final}.

For each $H\in \mathcal{G}$, by definition, there exists $L\in \mathcal{L}_2$ and halfspaces $H_1, H_2\in \mathcal{K}(P,L)$, such that $H=H'+L^\perp$, where $H'$ is a CG cut for $\mathcal{S}_L(H_1\cap H_2)$. By Proposition \ref{prop::pull-back}, part~\ref{prop::pull-back-CG}., $H$ is a CG cut for $\mathcal{S}_L(H_1\cap H_2)+L^\perp$. Since $P^1\subseteq \mathcal{S}_L(H_1\cap H_2)+L^\perp$, we have $H$ is a CG cut for $P^1$, which finishes the proof.\qed 
\end{proof}



Now we will show that $P^1=\mathcal{S}(P)$. In fact, we will show a more general theorem, which says that one can obtain the split closure of a polyhedron $P$ by considering the split closures of all $k$-dimensional projections of $P$, for any $k \geq 2$.
\begin{theorem}\label{thm:split-projs} Fix any $k \in \{2, \ldots, n\}$.
For any polyhedron $P\in \R^n$, we have 

\begin{equation}
    \mathcal{S}(P)=\bigcap_{L\in\mathcal{L}_k}(\mathcal{S}_L(P)+L^\perp)
\end{equation}
\end{theorem}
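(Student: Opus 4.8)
The plan is to prove the two inclusions separately. The inclusion $\mathcal{S}(P) \subseteq \bigcap_{L\in\mathcal{L}_k}(\mathcal{S}_L(P)+L^\perp)$ is the easy direction: fix $L \in \mathcal{L}_k$ and a split set $S$ in $L$ with respect to $\Lambda_L$. By Proposition \ref{prop::pull-back}, part \ref{prop::pull-back-split}, the set $S + L^\perp$ is a split set in $\R^n$, so $\conv(P \cap D) \supseteq \mathcal{S}(P)$ where $D = \R^n \setminus \intt(S+L^\perp)$; I would then check that the orthogonal projection of $\conv(P \setminus \intt(S+L^\perp))$ onto $L$ is contained in $\conv(P_L \setminus \intt_L(S))$ — this uses that projection commutes with convex hulls and that the preimage of $\intt_L(S)$ under the projection is exactly $\intt(S+L^\perp)$. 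Taking preimages (adding back $L^\perp$) and intersecting over all split sets $S$ in $L$ and then over all $L \in \mathcal{L}_k$ gives the inclusion.

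The reverse inclusion is the substantive part. I want to show that if $x^\ast \notin \mathcal{S}(P)$, then there is some $L \in \mathcal{L}_k$ with $x^\ast \notin \mathcal{S}_L(P) + L^\perp$. So fix a split set $S = \{x : \beta \leq \langle a, x\rangle \leq \beta+1\}$ in $\R^n$ (with $a \in \Z^n$ primitive, $\beta \in \Z$) that is violated by $x^\ast$, i.e., $x^\ast \notin \conv(P \setminus \intt(S))$. The key observation is that $a \in \Z^n$ spans a rational line, so there is a $k$-dimensional lattice subspace $L \in \mathcal{L}_k$ containing $a$ — indeed, complete $a$ to a basis of a rank-$k$ primitive sublattice of $\Z^n$ and let $L$ be its span; since this sublattice is primitive, $L$ is a lattice subspace. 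The point of choosing $L \ni a$ is that the split direction $a$ survives the projection to $L$: the set $S \cap L$ is a split set in $L$ with respect to $\Lambda_L$ (because $a$, viewed in $L$, is still primitive with respect to $\Lambda_L$ — this needs a short argument using that $L$ is spanned by a primitive sublattice). Moreover the projection $\pi_L$ onto $L$ satisfies $\pi_L^{-1}(S \cap L) = S$ since $S$ depends only on $\langle a, \cdot\rangle$ and $a \in L$. I would then argue: if $x^\ast \in \mathcal{S}_L(P) + L^\perp$, then $\pi_L(x^\ast) \in \mathcal{S}_L(P) \subseteq \conv(P_L \setminus \intt_L(S\cap L))$; pulling back and using $\pi_L^{-1}(\intt_L(S\cap L)) = \intt(S)$ together with $P \subseteq \pi_L^{-1}(P_L)$, I should be able to conclude $x^\ast \in \conv(P \setminus \intt(S)) + L^\perp$. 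The final step is to remove the ``$+L^\perp$'': since $S$ is invariant under translation by $L^\perp$, the set $P \setminus \intt(S)$ is ``saturated'' in the $L^\perp$ direction relative to $S$ in the sense needed, and a careful argument — or alternatively, choosing the disjunction more cleverly — shows $x^\ast \in \conv(P \setminus \intt(S))$, contradicting the choice of $S$.

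I expect the main obstacle to be exactly this last step, i.e., carefully handling the $+ L^\perp$ and making sure the pull-back of a point in $\conv(P_L \setminus \intt_L(S \cap L))$ that lies over $x^\ast$ can be realized as $x^\ast$ itself plus an element of $L^\perp$ lying in $\conv(P\setminus \intt(S))$; the cleanest route is probably to write $\pi_L(x^\ast) = \sum_i \lambda_i \pi_L(p_i)$ with $p_i \in P \setminus \intt(S)$ (using that $\intt(S) = \pi_L^{-1}(\intt_L(S\cap L))$ so $p_i \notin \intt(S)$), observe $\sum_i \lambda_i p_i \in \conv(P \setminus \intt(S))$ and $\pi_L(\sum \lambda_i p_i) = \pi_L(x^\ast)$, hence $\sum \lambda_i p_i = x^\ast + w$ for some $w \in L^\perp$, and then use the lineality of $P$ in the relevant directions — or the fact that one only needs $x^\ast \notin \mathcal{S}(P)$ to fail for \emph{some} split set, so it suffices to produce a violated split set in $L$ — to close the gap. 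A secondary technical point to get right is the existence of $L \in \mathcal{L}_k$ containing the primitive vector $a$ and the verification that $S \cap L$ is genuinely a split set with respect to $\Lambda_L$ (dimension $k$, empty $\Lambda_L$-interior), which should follow from standard Hermite Normal Form / primitive-sublattice arguments as invoked elsewhere in the paper.
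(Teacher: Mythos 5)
Your first inclusion ($\mathcal{S}(P)\subseteq\bigcap_L(\mathcal{S}_L(P)+L^\perp)$) is correct and is exactly how the paper argues it, via Proposition~\ref{prop::pull-back}, part~\ref{prop::pull-back-split}. The reverse inclusion, however, has a genuine gap, and it sits precisely at the step you flag at the end. You choose $L$ from the split direction $a$ alone. With that choice, the most you can extract is
$\mathcal{S}_L(P)+L^\perp \subseteq \conv\bigl(P_L\setminus\intt_L(S\cap L)\bigr)+L^\perp = \conv\bigl(P\setminus\intt(S)\bigr)+L^\perp$,
using $\intt(S)=\pi_L^{-1}(\intt_L(S\cap L))$. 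But $\conv(P\setminus\intt(S))$ is in general \emph{not} invariant under translations by $L^\perp$ (the split $S$ is, but $P$ is not), so $\conv(P\setminus\intt(S))+L^\perp$ is typically a strict superset of $\conv(P\setminus\intt(S))$, and a point $x^\ast$ violating some split cut derived from $S$ can perfectly well lie in it. Concretely, a split cut for $P$ from $S$ can have a normal $a_2$ with a component outside $L$; no cut pulled back from $L$ (all of whose normals lie in $L$) can reproduce it, so this particular $L$ cannot separate $x^\ast$, and there is no way to ``remove the $+L^\perp$'' by a saturation argument. A further warning sign is that your construction never uses the hypothesis $k\geq 2$: if choosing $L\ni a$ alone sufficed, the same argument would prove the statement for $k=1$, where the right-hand side is essentially a CG-type closure and the equality fails.

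The fix is to choose $L$ depending on \emph{both} the split set and the violated inequality, which is exactly what the paper's Theorem~\ref{thm:proj} does: given a halfspace $H=\{x:\langle a_2,x\rangle\leq\delta_2\}$ valid for $P\setminus\intt(S)$ that cuts off $x^\ast$, take $L\in\mathcal{L}_k$ containing both the split direction $a_1$ and the cut normal $a_2$ (this is where $k\geq2$ is needed). Then $H=T(H)+L^\perp$ with $T(H)=H\cap L$ a halfspace in $L$, $T(S)$ is a split set in $L$ with respect to $\Lambda_L$, and $T(H)\supseteq T(P)\setminus\intt_L(T(S))$; hence $\mathcal{S}_L(P)+L^\perp\subseteq H$ and $x^\ast$ is separated. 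In other words, the argument must run cut-by-cut (every split cut for $P$ is the pull-back of a split cut for some $k$-dimensional projection), not split-set-by-split-set with a subspace determined by the disjunction alone. Your parenthetical ``choosing the disjunction more cleverly'' gestures in this direction, but as written the proposal does not close the gap.
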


{By Proposition \ref{prop::pull-back} part~\ref{prop::pull-back-split}., if $H$ is a split cut for $P_L$ with respect to $\Lambda_L$ for some $L\in \mathcal{L}_k$, then $H+L^\perp$ is a split cut for $P$. This means $\mathcal{S}(P)\subseteq \mathcal{S}_L(P)+L^\perp$. Therefore, $\mathcal{S}(P)\subseteq \bigcap_{L\in\mathcal{L}_k}(\mathcal{S}_L(P)+L^\perp)$. The other direction is established in the next theorem.}
\begin{theorem}\label{thm:proj} Fix any $k \in \{2, \ldots, n\}$. Let $P\in \R^n$ be a polyhedron, $S$ be a split set in $\R^n$ which is described by $\delta_1 \leq \langle a_1,x\rangle\leq \delta_1+1$, where $a_1\in \Z^n$ and $\delta_1\in \Z$.  Assume $H\in \R^n$ is a halfspace represented by $\langle a_2,x \rangle\leq \delta_2 $ such that $H\supseteq P\backslash \intt(S)$.  { Let $L\in \mathcal{L}_k$ be any $k$ dimensional lattice subspace containing $a_1$ and $a_2$ (such an $L$ always exists because $k \geq 2$). Then the following are both true, where $T(\cdot)$ is the orthogonal projection operator from $\R^n$ to $L$.}
\begin{enumerate} \item $T(H)+L^\perp=H$, and $T(H) = H \cap L$ and is therefore a halfspace in $L$. \item $T(S)$ is a split set in $L$ with respect to $\Lambda_L$ and  $T(H)\supseteq T(P) \backslash \intt_L(T(S))$. In other words, $T(H)$ is a valid halfspace in $L$ for $\mathcal{S}_L(P)$.\end{enumerate}\end{theorem}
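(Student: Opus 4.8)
The whole argument hinges on one structural observation, so the plan is to isolate it first and then let everything fall out mechanically. The observation is that because $a_1$ and $a_2$ both lie in $L$, and $L^\perp$ is orthogonal to $L$ by definition, the two linear functionals $x\mapsto\langle a_1,x\rangle$ and $x\mapsto\langle a_2,x\rangle$ are constant on every fiber $T^{-1}(y)$: from $x-T(x)\in L^\perp$ we get $\langle a_i,x\rangle=\langle a_i,T(x)\rangle$ for $i=1,2$. Hence $S+L^\perp=S$ and $H+L^\perp=H$, i.e.\ both sets are cylinders in the $L^\perp$ direction. Part 1 then follows immediately: since $T$ restricts to the identity on $L$, I would show $T(H)=\{T(x):\langle a_2,x\rangle\le\delta_2\}=\{y\in L:\langle a_2,y\rangle\le\delta_2\}=H\cap L$, which is a halfspace in $L$ (the degenerate case $a_2=\0$, forcing $H=\R^n$, is harmless, giving $T(H)=L$), and that $T(H)+L^\perp=H$ by decomposing each $x\in H$ as $T(x)+(x-T(x))$ with $T(x)\in H\cap L$ and $x-T(x)\in L^\perp$.

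For part 2 I would first verify that $T(S)$ is a split set in $L$ with respect to $\Lambda_L$. The cylinder identity gives $T(S)=\{y\in L:\delta_1\le\langle a_1,y\rangle\le\delta_1+1\}$, which is the intersection of two halfspaces in $L$, is full-dimensional in $L$ (as $a_1\neq\0$ and $\delta_1<\delta_1+1$), and is a proper subset of $L$. The only real content is that $\intt_L(T(S))=\{y\in L:\delta_1<\langle a_1,y\rangle<\delta_1+1\}$ contains no point of $\Lambda_L$: any $z'\in\Lambda_L$ equals $T(z)$ for some $z\in\Z^n$, and since $a_1\in\Z^n\subseteq L$ we get $\langle a_1,z'\rangle=\langle a_1,z\rangle\in\Z$, which cannot lie strictly between two consecutive integers. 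This is the step where the integrality of $a_1$ and the fact that $\Lambda_L$ is the projection of $\Z^n$ (rather than $\Z^n\cap L$) are used, and it is essentially the only place anything integral enters.

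Next I would prove the inclusion $T(H)\supseteq T(P)\setminus\intt_L(T(S))$. Given $y\in T(P)\setminus\intt_L(T(S))$, choose $x\in P$ with $T(x)=y$; since $\langle a_1,x\rangle=\langle a_1,y\rangle\notin(\delta_1,\delta_1+1)$, we have $x\notin\intt(S)$, so $x\in P\setminus\intt(S)\subseteq H$, whence $\langle a_2,y\rangle=\langle a_2,x\rangle\le\delta_2$ and $y\in H\cap L=T(H)$. Finally, recalling $T(P)=P_L$, the set $T(H)$ is a halfspace in $L$ containing $P_L\setminus\intt_L(T(S))$, hence it contains $\conv(P_L\setminus\intt_L(T(S)))$, and therefore $\mathcal{S}_L(P)$; this is exactly the assertion that $T(H)$ is a valid halfspace in $L$ for $\mathcal{S}_L(P)$.

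I do not expect a genuine obstacle: once the cylinder observation is recorded, everything is bookkeeping. The points needing care are the degenerate cases ($P\setminus\intt(S)=\emptyset$, or $a_2=\0$) and keeping straight that ``split set in $L$ with respect to $\Lambda_L$'' refers to the projected lattice. It may also be worth inserting a one-line justification that a lattice subspace $L\in\mathcal{L}_k$ containing $a_1$ and $a_2$ exists: $\linspan\{a_1,a_2\}$ has dimension at most $2\le k$ and can be completed to a $k$-dimensional rational subspace, and every rational subspace is a lattice subspace.
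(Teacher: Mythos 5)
Your proof is correct and follows essentially the same route as the paper's: the core observation in both is that $H$ and $S$ are invariant under translation by $L^\perp$ because $a_1,a_2\in L$, so projection onto $L$ amounts to intersection with $L$, after which the validity of $T(H)$ for $T(P)\setminus\intt_L(T(S))$ is bookkeeping. The only cosmetic difference is that the paper packages the cylinder fact as a small lemma on convex sets $C$ with $\lin(C)^\perp\subseteq V$ and deduces $\Lambda_L\cap\intt_L(T(S))=\emptyset$ from $\intt(S)=\intt_L(T(S))+L^\perp$, whereas you argue directly via $\langle a_1,T(z)\rangle=\langle a_1,z\rangle\in\Z$; both are fine.
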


\begin{proof} {For any convex set $C \subseteq \R^n$ and any {linear} subspace $V \subseteq \R^n$ such that $\lin(C)^\perp \subseteq V$, {we will prove that} $C = (C\cap V) + V^\perp$. Since $C\cap V \subseteq C$ and $V^\perp \subseteq \lin(C)$, we have $(C \cap V) + V^\perp \subseteq C + \lin(C) = C$. Consider any $x \in C$ and so $x + V^\perp \subseteq x + \lin(C)\subseteq C$. Let $y \in (x + V^\perp)\cap V$ and therefore $y \in C \cap V$. Since $V^\perp$ is a linear subspace, $x \in y + V^\perp$, so we get $x \in (C\cap V) + V^\perp$. This also shows that the orthogonal projection of $C$ {onto} $V$ is simply $C\cap V$.

Note that $\lin(H)^\perp$ is the line spanned by $a_2$ which is contained in $L$. So the above observations can be applied with $C = H$ and $V = L$, giving us $H = (H \cap L) + L^\perp = T(H) + L^\perp $. This establishes 1. 

Let $H_1 = \{x \in \R^n : \langle a_1, x \rangle \geq \delta_1\}$ and $H_2= \{x \in \R^n : \langle a_1, x \rangle \leq \delta_1 + 1\}$ be the halfspaces. $\lin(H_1)^\perp = \lin(H_2)^\perp$ is the line spanned by $a_1$ which is contained in $L$. Applying the observation with $C = H_1, H_2$ and $V = L$, we obtain $T(H_i) = H_i \cap L $ for $i=1,2$. Thus, $T(H_i)$ are halfspaces in $L$. Applying the observation to $C=S$ and $V=L$, we obtain $T(S) = S \cap L = H_1 \cap H_2 \cap L = (H_1 \cap L) \cap (H_2 \cap L) = T(H_1) \cap T(H_2)$. Thus, $T(S)$ is the intersection of two halfspaces in $L$. A similar argument as above shows that $\intt(S) = (\intt(S) \cap L) + L^\perp = \intt_L(S \cap L) + L^\perp = \intt_L(T(S)) + L^\perp$. Since $\Z^n \cap \intt(S) = \emptyset$, we have that $\Z^n \cap (\intt_L(T(S)) + L^\perp) = \emptyset$. This implies $\Lambda_L \cap \intt_L(T(S)) =\emptyset$ showing that $T(S)$ is a split set in $L$ with respect to $\Lambda_L$.

{We finally check that $T(H)\supseteq T(P)\backslash \intt_L(T(S))$. For $x\in T(P)\backslash \intt_L(T(S))$, there exists $v\in L^\perp$ such that $T(x + v)=x$ and $x+v \in P$. Since $x\notin \intt_L(T(S))$, and $\intt(S)=\intt_L(T(S)) + L^\perp$ as proved above,  we have $x + v\notin \intt(S)$, and thus $x + v\in P\backslash \intt(S)\subseteq H$. Thus $x\in T(H)$. 
}
}\qed

\end{proof}

By taking $k=2$ in Theorem~\ref{thm:split-projs}, we obtain

\begin{corollary}\label{cor::P1-is-Psplit}
Given a polyhedron $P$ in $\R^n$, we have $P^{1}= \mathcal{S}(P)$, where $P^1$ is as defined in Lemma~\ref{thm::TP-decomp}.
\end{corollary}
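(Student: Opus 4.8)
\textbf{Proof proposal for Corollary~\ref{cor::P1-is-Psplit}.} The plan is to read off the corollary from Theorem~\ref{thm:split-projs} with $k=2$. Indeed, by the definition in Lemma~\ref{thm::TP-decomp} we have $P^1=\bigcap_{L\in\mathcal{L}_2}(\mathcal{S}_L(P)+L^\perp)$, which is precisely the right-hand side of the identity asserted in Theorem~\ref{thm:split-projs} when $k=2$. So the only thing to do is to finish the proof of Theorem~\ref{thm:split-projs}; one inclusion is already recorded in the excerpt, and the other follows from Theorem~\ref{thm:proj}.

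The inclusion $\mathcal{S}(P)\subseteq P^1$ is the easy direction, and I would argue it exactly as in the paragraph preceding Theorem~\ref{thm:proj}: for every $L\in\mathcal{L}_2$, Proposition~\ref{prop::pull-back}(\ref{prop::pull-back-split}) says that a split cut for $P_L$ with respect to $\Lambda_L$ pulls back to a split cut for $P$ of the form $H+L^\perp$, whence $\mathcal{S}(P)\subseteq\mathcal{S}_L(P)+L^\perp$; intersecting over $L\in\mathcal{L}_2$ gives $\mathcal{S}(P)\subseteq P^1$. For the reverse inclusion $P^1\subseteq\mathcal{S}(P)$, I would first note that $\mathcal{S}(P)$ equals the intersection of all its split cuts: each $P^{a,K}=\conv(P\setminus\intt(S))$ (with $S$ the split set $K\le\langle a,x\rangle\le K+1$) is a polyhedron, hence an intersection of halfspaces $H$ satisfying $P\setminus\intt(S)\subseteq H$. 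So fix one such $H$, say $\langle a_2,x\rangle\le\delta_2$, with $S$ given by $\delta_1\le\langle a_1,x\rangle\le\delta_1+1$. Apply Theorem~\ref{thm:proj} with $k=2$: there is a $2$-dimensional lattice subspace $L$ containing $a_1$ and $a_2$ with (i) $T(H)=H\cap L$ a halfspace in $L$ and $T(H)+L^\perp=H$, and (ii) $T(S)$ a split set in $L$ with respect to $\Lambda_L$ with $T(H)\supseteq T(P)\setminus\intt_L(T(S))$. By (ii), $T(H)$ is a valid halfspace for $\mathcal{S}_L(P)$, since $\conv(P_L\setminus\intt_L(T(S)))$ is one of the sets in the intersection defining $\mathcal{S}_L(P)$ and is contained in the convex set $T(H)$; hence $\mathcal{S}_L(P)\subseteq T(H)$, and adding $L^\perp$ and using (i) gives $\mathcal{S}_L(P)+L^\perp\subseteq H$. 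Since $P^1\subseteq\mathcal{S}_L(P)+L^\perp$, we get $P^1\subseteq H$, and as $H$ ranges over all split cuts for $P$ this yields $P^1\subseteq\mathcal{S}(P)$.

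I do not expect any genuine obstacle at the level of the corollary itself: all the technical weight — in particular the existence of a $2$-dimensional lattice subspace through the two integer normal vectors $a_1,a_2$ (which is exactly where the hypothesis $k\ge 2$ is used) and the projection identities $H=T(H)+L^\perp$ and $\intt(S)=\intt_L(T(S))+L^\perp$ — has already been carried out inside Theorem~\ref{thm:proj}. The only care needed is bookkeeping: verifying that $\mathcal{S}(P)$ is the intersection of its split cuts (so that checking $P^1\subseteq H$ for every split cut $H$ is enough), and that $T(S)$ being a split set in $L$ with respect to $\Lambda_L$ places $\conv(P_L\setminus\intt_L(T(S)))$ among the sets defining $\mathcal{S}_L(P)$.
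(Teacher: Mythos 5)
Your proposal is correct and takes essentially the same route as the paper: the corollary is read off from Theorem~\ref{thm:split-projs} with $k=2$, whose easy inclusion $\mathcal{S}(P)\subseteq\mathcal{S}_L(P)+L^\perp$ comes from Proposition~\ref{prop::pull-back}, part~\ref{prop::pull-back-split}, and whose reverse inclusion is exactly what Theorem~\ref{thm:proj} supplies. You merely make explicit the bookkeeping the paper leaves implicit, namely that it suffices to check $P^1\subseteq H$ for each halfspace $H$ valid for some $P\setminus\intt(S)$ and that $T(S)$ being a split set in $L$ puts $\conv\left(P_L\setminus\intt_L(T(S))\right)$ among the sets defining $\mathcal{S}_L(P)$.
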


We have finally collected all the tools to prove Theorem~\ref{thm:2-halfspace-poly} and Corollary~\ref{cor:split-2-half}.

\begin{proof}[Proof of Theorem~\ref{thm:2-halfspace-poly}] By Corollary~\ref{cor::P1-is-Psplit} and the fact that the split closure is a polyhedron~\cite{cook-kannan-schrijver}, we have that $P^1$ is a polyhedron. By Lemma~\ref{thm::TP-decomp}, $\mathcal{T}(P)$ is obtained from $P^1$ by adding a subset of Chv\'atal-Gomory cuts for $P^1$. By Theorem 1.1 in \cite{MR2969261}, such a subset of CG cuts is dominated by a finite subset of CG cuts\footnote{In fact, Theorem 1.1 in \cite{MR2969261} shows that any collection of CG cuts contains a finite subcollection of cuts that dominates the entire collection. { This is also proved in \cite{andersen2005split}.}}. This completes the proof.\qed
\end{proof}

\begin{proof}[Proof of Corollary~\ref{cor:split-2-half}] One observes that $\mathcal{S}^2(P) \subseteq \mathcal{T}(P)\subseteq \mathcal{S}(P)$ by Lemma \ref{thm::TP-decomp}, Corollary \ref{cor::P1-is-Psplit}, and the fact that any CG cut is a split cut. Applying this observation iteratively proves the corollary.\qed
\end{proof}

\section{Example that shows the containment $\mathcal{T}(P)\subseteq \mathcal{S}(P)$ can be strict}\label{sec:strict-cont}
\begin{example}\label{exp::strict-contain}
Consider a simplicial cone $P=H_1\cap H_2\subseteq \R^2$, where  
$H_1=\{(x_1,x_2)\in \R^2:x_2\leq 2x_1+1/2\}$ and $H_2=\{(x_1,x_2)\in \R^2:x_2\leq -2x_1+5/2\}$. 
\end{example}

 \noindent{\bf Claim: }Then the point  $z:=\left(\frac{1}{2},\frac{1}{99}\right)$ is in $\mathcal{S}(P)$ but not in $\mathcal{T}(P)$.
\begin{figure}
\centering
\includegraphics[width=0.5\textwidth]{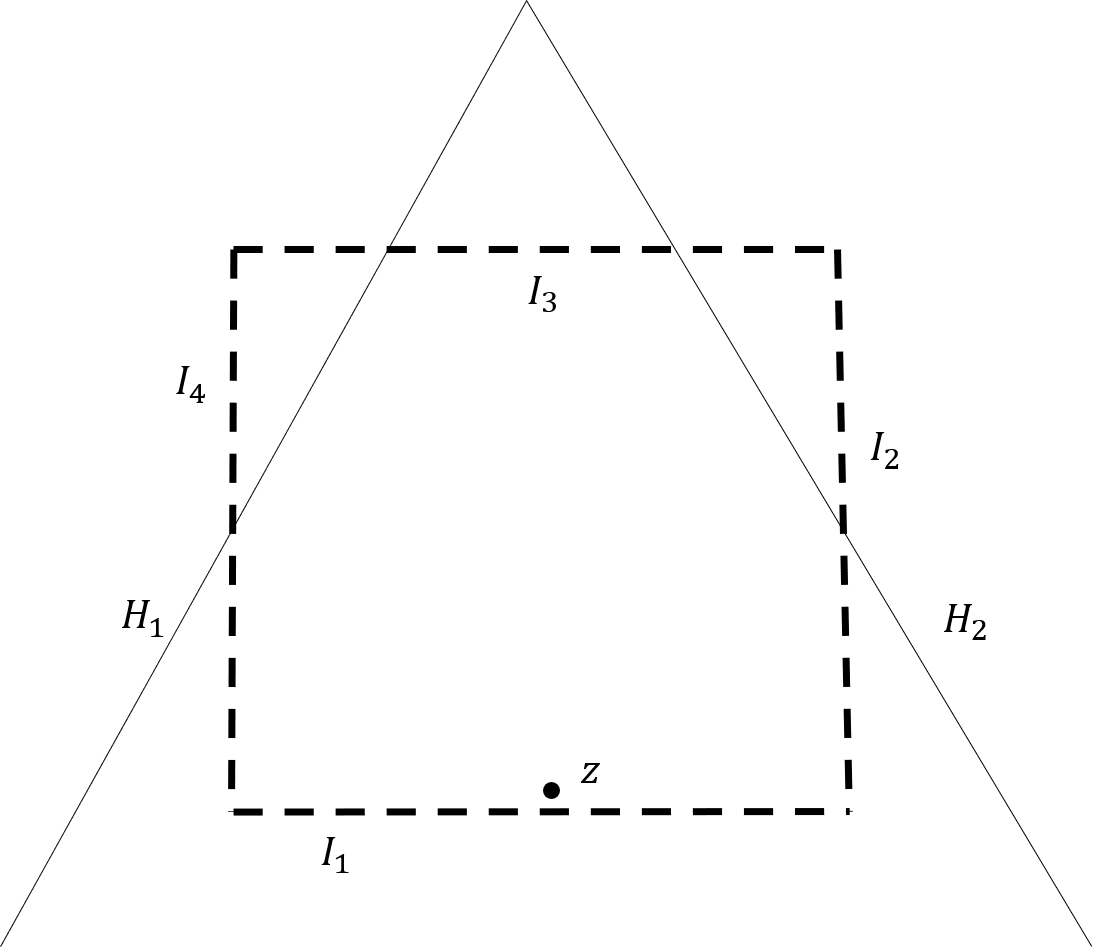}
\caption{Example \ref{exp::strict-contain}}

\end{figure}
\begin{proof}
It is clear that $\mathcal{T}(P)=P_I$, so $z\notin \mathcal{T}(P)$ since $P_I\subseteq\{(x_1,x_2)\in \R^2:x_2\leq 0\}$. 

Let $I_1,I_2,I_3$, and $I_4$ denote four segments: $\conv(\{(0,0), (1,0)\})$, $\conv(\{(1,0),(1,1)\})$, $\conv(\{(1,1),(0,1)\})$, and $\conv(\{(0,1),(0,0)\})$. Then for any split set $S$ such that $z\in S$, $S$ has to intersect with $\intt(\conv(I_1\cup I_2\cup I_3\cup I_4)$. Thus the split set has to intersect with the relative interior of exactly two of $I_1,\ldots, I_4$.  To produce a split cut that cuts off the apex of $P$, it can only {intersect} with the relative interior of $I_1$ and $I_3$, $I_2$ and $I_3$, or $I_3$ and $I_4$. Also, since $z\notin \conv(I_3\cup I_4)\cup \conv(I_3\cup I_2)$, so any split set that intersects with  interior of $I_2$ and $I_3$, or $I_3$ and $I_4$, it does not contain $z$. Thus we only need to consider the split sets that intersect with the relative interior of $I_1$ and $I_3$. It is clear that any split cut produced by such split sets is valid to $P^{(1,0), 0}$, using the notation from~\eqref{eq:split-closure}.
Then by simple calculation, $z\in P^{(1,0), 0}$. Thus $z\in \mathcal{S}(P)$.\qed
\end{proof}

\section{Future directions} One can naturally define the {\it $k$-halfspace closure} for any fixed natural number $k$: one considers a polyhedral relaxation $Q$ of $P$ defined by the intersection of $k$ halfspaces and then valid inequalities for $Q_I$ can be used as cutting planes for $P$. The closure is then defined as $$\mathcal{H}_k(P):=\bigcap_{\substack{k \textrm{-halfspace rational }\\ \textrm{relaxation }Q\textrm{ such that }\\P\subseteq Q}} Q_I.$$ For a fixed natural number $k$, computing the integer hull of a polyhedron $Q$ with at most $k$ facets is again a $k$-dimensional integer hull question by projecting onto the orthogonal complement of the lineality space of $Q$, { and, for example, enumerating integer points as in \cite{cook-hartmann-kannan-mcdiarmid-1992} and convexifying it, which is polynomially computable}. Hence, when $n$ is much larger than $k$, this is a reasonable operation and from a theoretical perspective one can let $n$ grow, but keep $k$ fixed and try to understand $\mathcal{H}_k(P)$. One is naturally led to

\begin{conjecture} For any fixed natural number $k\in \N$, and any rational polyhedron $P$, $\mathcal{H}_k(P)$ is a rational polyhedron.
\end{conjecture}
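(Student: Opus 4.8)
The plan is to mimic the structure of the proof of Theorem~\ref{thm:2-halfspace-poly}, pushing the key reductions from dimension $2$ to dimension $k$. The main engine should again be a decomposition of $\mathcal{H}_k(P)$ into (i) the $k$-dimensional analogue of $P^1$, namely $P^{(k)} := \bigcap_{L \in \mathcal{L}_k}(\mathcal{S}_L(P) + L^\perp)$, intersected with (ii) a family $\mathcal{G}$ of halfspaces, each of which pulls back (in the sense of Proposition~\ref{prop::pull-back}) to a CG cut for $P^{(k)}$. First I would record the exact generalization of Proposition~\ref{prop::IHtwo}: if $Q = H_1 \cap \cdots \cap H_k$ with $L := \lin(Q)^\perp$ (so $\dim L \le k$), then $Q_I = Q_{I,L} + L^\perp$; the proof is verbatim the one given, since it only used rationality of the $H_i$ and the fact that $Q_I$ inherits the lineality space $L^\perp$. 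By Theorem~\ref{thm:split-projs}, taken at this $k$, we already know $P^{(k)} = \mathcal{S}(P)$, which is a polyhedron by~\cite{cook-kannan-schrijver}; so once the decomposition is in place, the finiteness argument of~\cite{MR2969261} on collections of CG cuts closes the proof exactly as before.

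The genuinely new ingredient is what replaces Equation~\eqref{eq::2D-rank} in Lemma~\ref{thm::TP-decomp}. For $k=2$ the argument used the fact (Theorem~\ref{thm:rank-IH}) that the integer hull of a two-dimensional simplicial cone is obtained by one round of split closure followed by the CG closure, i.e.\ $\mathcal{C}_L(\mathcal{S}_L(Q)) = Q_{I,L}$ for two-dimensional $Q$. In dimension $k$ this is false in general, but it does not need to be true in the same clean form: what I actually need is that for any $k$-dimensional rational polyhedron $Q \subseteq L$, the lattice hull $Q_{I,L}$ is obtained from $\mathcal{S}_L(Q)$ by intersecting with some collection of CG cuts \emph{that are valid for (hence CG cuts for) a bounded relaxation we control} — and, crucially, that this collection, pulled back, consists of CG cuts for $P^{(k)}$. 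The cleanest route: iterate. Since $\mathcal{S}^{t}_L(Q) = Q_{I,L}$ for some finite $t$ depending only on the defining data, and each split closure in $L$ is in turn contained in a CG-type strengthening, one can write $Q_{I,L}$ as $\mathcal{S}_L(Q)$ intersected with a (possibly infinite) family $\mathcal{G}_{L,H_1,\dots,H_k}$ of halfspaces each of which is a CG cut for $\mathcal{S}_L(Q)$ — this uses only that $\mathcal{S}_L(Q) \supseteq Q_{I,L}$ together with Theorem~\ref{thm:chv-closure-II} applied inside $L$ (the CG closure of $\mathcal{S}_L(Q)$, iterated finitely, reaches $Q_{I,L}$, and any face of this tower is cut out by CG cuts of $\mathcal{S}_L(Q)$ after absorbing lower-rank cuts into $\mathcal{S}_L(Q)$; more carefully, one shows each defining inequality of $Q_{I,L}$ is a single CG cut of $\mathcal{S}_L(Q)$, which is the content one must extract from the finite-rank statement). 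Then the chain of equalities~\eqref{eq::all-IHtwo}--\eqref{eq:final} goes through with $\mathcal{L}_2$ replaced by $\mathcal{L}_k$, using Theorem~\ref{thm:split-projs} in place of Corollary~\ref{cor::P1-is-Psplit} and using that the intersection over all $k$-facet relaxations of $Q$ of the split closures $\mathcal{S}_L(\cdot)$ equals $\mathcal{S}_L(P_L)$ (the generalization of~\cite{andersen2005split} from simplicial cones to $k$-facet polyhedra, which holds because every rational polyhedron in $L$ is an intersection of $k$-facet — indeed simplicial-cone — relaxations).

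Assembling: one proves the analogue of Lemma~\ref{thm::TP-decomp}, $\mathcal{H}_k(P) = \bigcap_{H \in \mathcal{G}} P^{(k)} \cap H$ with every $H \in \mathcal{G}$ a CG cut for $P^{(k)} = \mathcal{S}(P)$; then Theorem~1.1 of~\cite{MR2969261} says finitely many of these CG cuts dominate the rest, and since $\mathcal{S}(P)$ is a rational polyhedron, so is $\mathcal{H}_k(P)$. The analogue of Corollary~\ref{cor:split-2-half} would also drop out, giving $\mathcal{S}^{?}(P) \subseteq \mathcal{H}_k(P) \subseteq \mathcal{S}(P)$ with the exponent governed by the split rank of $k$-dimensional polyhedra rather than the constant $2$.

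\textbf{Main obstacle.} The hard part is the $k$-dimensional replacement for~\eqref{eq::2D-rank}: in two dimensions one had the sharp statement $Q_{I,L} = \mathcal{C}_L(\mathcal{S}_L(Q))$ (Theorem~\ref{thm:rank-IH}), but in dimension $\ge 3$ the integer hull of a $k$-facet polyhedron is \emph{not} one split closure plus one CG closure. What one must show is weaker but still delicate — that $Q_{I,L}$ is $\mathcal{S}_L(Q)$ intersected with a family of halfspaces each individually a CG cut \emph{of} $\mathcal{S}_L(Q)$ (so that the pull-back lands among CG cuts of $P^{(k)}$ and the~\cite{MR2969261} finiteness applies). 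Establishing this requires a careful induction on split rank inside $L$, tracking that every inequality generated at rank $j$ can be reabsorbed as a rank-$1$ CG cut over the rank-$(j{-}1)$ closure; making the bookkeeping precise — and verifying that the resulting halfspaces pull back correctly via all parts of Proposition~\ref{prop::pull-back} simultaneously — is where the real work lies. If instead this reabsorption fails, an alternative is to enlarge $\mathcal{G}$ to a family of \emph{split} cuts for $P^{(k)}$ and invoke the polyhedrality of the split closure iteratively together with~\cite{MR2969261}; but controlling termination of that scheme is itself nontrivial, so I expect the CG-reabsorption route to be the one to push through.
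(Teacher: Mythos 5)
You should first be aware that this statement is not a theorem of the paper: it is Conjecture 1 in the ``Future directions'' section, explicitly left open, so there is no proof of the authors' to compare against. Your proposal does not close the gap, and the place where it breaks is exactly the place you flag as ``the real work.'' Your plan needs to write $Q_{I,L}$ (for a $k$-halfspace relaxation $Q$ projected to $L\in\mathcal{L}_k$) as $\mathcal{S}_L(Q)$ intersected with a family of halfspaces each of which is a \emph{rank-one CG cut for} $\mathcal{S}_L(Q)$, so that the pulled-back cuts are CG cuts for the single polyhedron $P^{(k)}=\mathcal{S}(P)$ and the finite-domination theorem of \cite{MR2969261} applies as in the proof of Theorem~\ref{thm:2-halfspace-poly}. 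But this ``weaker'' requirement is not weaker at all: every CG cut for $\mathcal{S}_L(Q)$ contains $\mathcal{C}_L(\mathcal{S}_L(Q))$, and $\mathcal{C}_L(\mathcal{S}_L(Q))\supseteq Q_{I,L}$ always holds, so a decomposition of the kind you describe exists if and only if $Q_{I,L}=\mathcal{C}_L(\mathcal{S}_L(Q))$ --- which is precisely the dimension-$k$ analogue of Theorem~\ref{thm:rank-IH} (one split closure followed by one CG closure reaches the integer hull), the very statement you concede is false for $k\geq 3$. Hence the proposed ``reabsorption'' of higher-rank inequalities into rank-one CG cuts over the split closure cannot be carried out, and the analogue of \eqref{eq::2D-rank}, the only genuinely new ingredient your argument requires, is unavailable. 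The pieces you do establish (the $k$-dimensional version of Proposition~\ref{prop::IHtwo}, $P^{(k)}=\mathcal{S}(P)$ via Theorem~\ref{thm:split-projs}, and the intersection over $k$-halfspace relaxations inside $L$) are correct but are exactly the parts the paper already supplies for all $k\geq 2$; they are not where the difficulty lies.

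Your fallback of iterating split closures inside $L$ also does not terminate in a usable form. Cuts needed beyond the first round are cuts for the intermediate bodies $\mathcal{S}_L^j(Q)$, which pull back (via Proposition~\ref{prop::pull-back}) only to cuts valid for $\mathcal{S}^j(P)$, not to CG or split cuts for the one fixed rational polyhedron $\mathcal{S}(P)$; the finiteness theorem of \cite{MR2969261} is a statement about a family of cuts for a single rational polyhedron, so it cannot be invoked across these varying bodies. To salvage any layered scheme you would need a bound, uniform in the relaxation $Q$ and depending only on $k$, on the number of rounds (e.g.\ on the split rank of $k$-dimensional rational polyhedra); for $k=2$ that bound is $2$ by Theorem~\ref{thm:rank-IH}, but for $k\geq 3$ no such bound is known, and the absence of such a tool is essentially why the authors state polyhedrality of $\mathcal{H}_k(P)$ only as a conjecture. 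As written, your argument should be presented as a reduction of the conjecture to this missing uniform statement, not as a proof.
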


A well-known result~\cite{bell1974intersections,bell1975improved} in integer programming theory says that for any two dimensional rational polytope $P$, the integer hull $P_I$ is given by the intersection of all integer hulls of two-halfspace relaxations of $P$ obtained from every pair of facets of $P$ (for completeness, we include an alternate proof of this result in Appendix~\ref{sec:2DIH}). This result implies that the two-halfspace closure $\mathcal{H}_2(P) = \mathcal{T}(P)$ for any rational polyhedron $P\subseteq \R^n$ is the same as considering all possible two-dimensional rational projections of $P$, taking the two-dimensional integer (lattice) hull and ``lifting back" to the original space, and intersecting over all possible two dimensional projections. One can also generalize this idea by considering all possible $k$-dimensional projections: define $$\mathcal{P}_k(P) := \bigcap_{L\in\mathcal{L}_k} \big(P_{I,L}+L^\perp\big).$$ It is not hard to see that $\mathcal{C}(P) = \mathcal{P}_1(P){=\mathcal{H}_1(P)}$. Thus, this can be considered a natural generalization of the Chv\'atal-Gomory closure as well.
\medskip

For $k\geq 3$, it is not clear if $\mathcal{H}_k(P) = \mathcal{P}_k(P)$, like the case of $k=1,2$. We strongly suspect this is not the case, because the $k=2$ case crucially uses the fact that for a polyhedron in two-dimensions, the integer hull is precisely the the intersection of  integer hulls of all two-halfspace relaxations, and one may restrict one's attention to two-halfspace relaxations coming from the defining inequalities of the polyhedron itself (see Theorem~\ref{thm:2DIH} in Appendix \ref{sec:2DIH}). Firstly, it is not clear if one can restrict one's attention to corner polyhedra in the projection to describe $\mathcal{H}_k(P)$, for three and higher dimensions. Secondly, it is known that, in general, the intersection of all corner polyhedra does {\it not} give the integer hull for dimensions three and higher~\cite{bell1974intersections,bell1975improved}. Thus, we are led to believe

\begin{conjecture} For any fixed natural number $k\geq 3$, and any $n \geq k$, there are instances of rational polyhedra $P\subseteq \R^n$ such that $\mathcal{H}_k(P) \neq \mathcal{P}_k(P)$.
\end{conjecture}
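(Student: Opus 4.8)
Here is a plan of attack.

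The first step is to record that $\mathcal{P}_k(P)\subseteq\mathcal{H}_k(P)$ for \emph{every} rational polyhedron $P\subseteq\R^n$, so that the conjecture is purely about strictness. Given a $k$-halfspace relaxation $Q=H_1\cap\cdots\cap H_k\supseteq P$ with outer normals $a_1,\dots,a_k$, the rational subspace $M=\linspan\{a_1,\dots,a_k\}=\lin(Q)^\perp$ has dimension at most $k$; I would extend it to a $k$-dimensional rational — hence lattice — subspace $L$. Since $L^\perp\subseteq\lin(Q)$, the argument of Proposition~\ref{prop::IHtwo} gives $Q_I=Q_{I,L}+L^\perp$, while $P_L\subseteq Q_L$ gives $P_{I,L}\subseteq Q_{I,L}$; hence $\mathcal{P}_k(P)\subseteq P_{I,L}+L^\perp\subseteq Q_I$, and intersecting over all $Q$ yields the inclusion. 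One also checks, by composing orthogonal projections, the nesting $\mathcal{P}_2(P)\supseteq\mathcal{P}_3(P)\supseteq\cdots\supseteq\mathcal{P}_n(P)=P_I$, which is used below.

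Next I would reduce to the case $n=k$ by a cylinder construction. Suppose $Q^\star\subseteq\R^k$ is a rational polytope with $\mathcal{H}_k(Q^\star)\supsetneq(Q^\star)_I$, and put $P:=Q^\star\times\R^{n-k}$. Since $\lin(P)=\{\0\}\times\R^{n-k}$, every halfspace containing $P$ has its normal in $\R^k\times\{\0\}$, so every $k$-halfspace relaxation of $P$ is of the form $Q'\times\R^{n-k}$ for a $(\leq k)$-halfspace relaxation $Q'\supseteq Q^\star$ in $\R^k$, with integer hull $(Q')_I\times\R^{n-k}$; conversely each such $Q'$ lifts. Thus $\mathcal{H}_k(P)=\mathcal{H}_k(Q^\star)\times\R^{n-k}$, whereas using $L=\R^k\times\{\0\}\in\mathcal{L}_k$ in the definition of $\mathcal{P}_k$ gives $\mathcal{P}_k(P)\subseteq(Q^\star)_I\times\R^{n-k}\subsetneq\mathcal{H}_k(P)$. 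Since $\mathcal{L}_k=\{\R^k\}$ when we work inside $\R^k$, we have $\mathcal{P}_k(Q^\star)=(Q^\star)_I$, and the task becomes: for each $k\geq 3$, produce a rational polytope $Q^\star\subseteq\R^k$ with $\mathcal{H}_k(Q^\star)\supsetneq(Q^\star)_I$.

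For the construction I would start from the phenomenon behind~\cite{bell1974intersections,bell1975improved}: for $k\geq 3$ there are rational polytopes whose integer hull is a strict subset of the intersection of their corner polyhedra. I would fix such a $Q^\star$ — also taken to be \emph{simple}, and with $\mathcal{P}_{k-1}(Q^\star)\cap\bigcap_C C\supsetneq(Q^\star)_I$ (where $C$ ranges over the corner polyhedra of $Q^\star$), a genericity-type condition that one expects but must verify for the chosen example — and fix a rational witness $z\in\big(\mathcal{P}_{k-1}(Q^\star)\cap\bigcap_C C\big)\setminus(Q^\star)_I$. The containment $\mathcal{H}_k(Q^\star)\subseteq\bigcap_C C$ holds automatically but is the wrong direction; the real content is that \emph{no} $k$-halfspace relaxation cuts off $z$. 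The plan is: (i) reduce to ``tight'' relaxations $Q'\supseteq Q^\star$, all of whose facet inequalities support $Q^\star$, since tightening right-hand sides only shrinks $(Q')_I$; (ii) if the normals of $Q'$ span a subspace $V$ of dimension $d<k$, then $Q'=Q'_V+V^\perp$ is a cylinder over a relaxation $Q'_V$ of the projection $(Q^\star)_V$, and $(Q'_V)_{I,V}\supseteq(Q^\star)_{I,V}$, which contains $\pi_V(z)$ because $z\in\mathcal{P}_{k-1}(Q^\star)\subseteq\mathcal{P}_d(Q^\star)\subseteq(Q^\star)_{I,V}+V^\perp$, so $z\in(Q')_I$; (iii) if the $k$ normals are linearly independent, $Q'$ is a pointed simplicial cone with apex $v$, and when $v\in Q^\star$ one checks that $v$ is a vertex and, using simplicity of $Q^\star$, that $Q'$ contains the simplicial tangent cone at $v$ — a corner polyhedron — so $(Q')_I\supseteq C\ni z$.

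The main obstacle will be the remaining subcase of (iii): simplicial cones $Q'\supseteq Q^\star$ whose apex $v$ lies \emph{outside} $Q^\star$. These have no analogue among corner polyhedra, so one must show that as $v$ ranges over $\R^k\setminus Q^\star$ and the $k$ supporting directions over all rational tuples, $(Q')_I$ always contains $z$. I expect two possible lines of attack: a compactness argument — as $v$ approaches $\partial Q^\star$ the cone $Q'$ degenerates to a tangent cone of $Q^\star$ at a boundary point, whose integer hull is ``large'', and for $v$ far from $Q^\star$ the cone $Q'$ is itself ``large'', so only a bounded set of apices is genuinely dangerous and can be analyzed by hand — or, more likely to succeed on a first pass, fixing an explicit small polytope (concretely in $\R^3$, and a genuinely $k$-dimensional Bell--Scarf instance for $k>3$) together with an explicit $z$, and verifying $z\in(Q')_I$ for every relaxation by a finite case analysis of the combinatorial types of $Q'$ relative to $Q^\star$. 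Either way, the crux is controlling integer hulls of simplicial cones whose apex lies outside the polytope.
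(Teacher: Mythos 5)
This statement is posed in the paper as an open conjecture: the paper offers no proof of it (it only remarks that the case $n=k$ ``follows'' from \cite{bell1974intersections,bell1975improved}), so your proposal has to stand entirely on its own, and as written it is a programme rather than a proof. The parts that do work: the containment $\mathcal{P}_k(P)\subseteq\mathcal{H}_k(P)$ (extend the span of the normals of a $k$-halfspace relaxation to a $k$-dimensional lattice subspace and argue as in Proposition~\ref{prop::IHtwo}), the nesting $\mathcal{P}_2(P)\supseteq\mathcal{P}_3(P)\supseteq\cdots\supseteq\mathcal{P}_n(P)=P_I$, and the cylinder construction $P=Q^\star\times\R^{n-k}$, which correctly reduces the conjecture for all $n\geq k$ to exhibiting one rational polytope $Q^\star\subseteq\R^k$ with $\mathcal{H}_k(Q^\star)\supsetneq(Q^\star)_I$. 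You also rightly flag that the cited Bell--Scarf results only say the intersection of the corner polyhedra of $Q^\star$ exceeds $(Q^\star)_I$; since $\mathcal{H}_k(Q^\star)$ is contained in that intersection, this is the wrong direction and does not by itself give the base case (this is in fact a gap in the paper's own parenthetical claim about $n=k$).

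The crux, however, is exactly where the proposal stops, and there are two genuine gaps. First, the existence of a witness $z\in\bigl(\mathcal{P}_{k-1}(Q^\star)\cap\bigcap_C C\bigr)\setminus(Q^\star)_I$ (with $C$ ranging over the corner polyhedra of $Q^\star$, and $Q^\star$ simple) is an unverified hypothesis: for $k=3$ it already demands a point outside the integer hull that survives the full two-halfspace closure $\mathcal{T}(Q^\star)=\mathcal{P}_2(Q^\star)$ as well as every corner polyhedron, and nothing in the literature you invoke certifies such a point for any concrete instance, let alone for every $k\geq 3$. Second, the case of $k$-halfspace relaxations that are pointed simplicial cones with apex outside $Q^\star$ is left entirely open; these have no corner-polyhedron analogue, the apices range over an unbounded region and the facet normals over all rational directions, so there is no a priori finite case analysis, and neither the compactness/degeneration heuristic nor the ``explicit small polytope'' verification is carried out. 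Until both points are settled for at least one explicit $Q^\star$ in each dimension $k\geq3$, the statement remains what it is in the paper: an open conjecture, for which your text is a reasonable plan of attack but not a proof.
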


{The discussion above shows that Conjecture 2 is true for $n=k$ since $\mathcal{P}_k(P)$ is simply the integer hull of $P$.} Finally, one wonders if $\mathcal{P}_k(P)$ is polyhedral for any fixed $k$.

\begin{conjecture} For any fixed natural number $k\in \N$, and any rational polyhedron $P$, $\mathcal{P}_k(P)$ is a rational polyhedron.
\end{conjecture}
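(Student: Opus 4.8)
The plan is to adapt the proof of Theorem~\ref{thm:2-halfspace-poly}, which rests on two pillars: a polyhedral ``base'' relaxation squeezed between $\mathcal{T}(P)$ and $P$ -- namely the split closure $\mathcal{S}(P)=\bigcap_{L\in\mathcal{L}_2}(\mathcal{S}_L(P)+L^\perp)$ of Corollary~\ref{cor::P1-is-Psplit} -- together with the fact that the remaining cuts are Chv\'atal-Gomory cuts for this base, hence finitely dominated by Theorem 1.1 of \cite{MR2969261}. So I would aim to construct, for each fixed $k$, a rational polyhedron $\mathcal{B}_k(P)$ with $P_I\subseteq\mathcal{P}_k(P)\subseteq\mathcal{B}_k(P)$ and a family $\mathcal{G}$ of valid cuts for $\mathcal{B}_k(P)$ with $\mathcal{P}_k(P)=\mathcal{B}_k(P)\cap\bigcap_{H\in\mathcal{G}}H$, the family $\mathcal{G}$ being finitely dominated. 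A preliminary reduction is available: since any $j$-dimensional lattice subspace extends to a $k$-dimensional one and orthogonal projection is transitive, $P_{I,L_k}+L_k^\perp\subseteq P_{I,L_j}+L_j^\perp$ whenever $L_j\subseteq L_k$, so $P_I=\mathcal{P}_n(P)\subseteq\mathcal{P}_k(P)\subseteq\mathcal{P}_2(P)=\mathcal{T}(P)$; hence $\rec(\mathcal{P}_k(P))=\rec(P_I)=\rec(P)$ and one may first treat the case that $P$ is a polytope.

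The decomposition step starts from the observation that for each $L\in\mathcal{L}_k$, $P_{I,L}=\conv(P_L\cap\Lambda_L)$ is a rational polytope whose facets not inherited from $P_L$ cut off lattice-point-free regions of $P_L$; by the flatness theorem any $\Lambda_L$-free convex body in $L$ has lattice width bounded by a function $f(k)$ of $k$ alone. Thus, after lifting, the cuts that $\mathcal{P}_k(P)$ imposes on $P$ beyond $\mathcal{S}(P)$ are cutting planes associated with lattice-free sets in $\R^n$ that are cylinders over a $\le k$-dimensional base of bounded width. There are two ways forward. One is to keep the $k=2$ template, take $\mathcal{B}_k(P)=\mathcal{S}(P)$, and hope the residual cuts are Chv\'atal-Gomory cuts for $\mathcal{S}(P)$ so that Theorem 1.1 of \cite{MR2969261} applies. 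The other is to identify $\mathcal{P}_k(P)$ with the closure of $P$ with respect to all lattice-free sets of max-facet-width at most $f(k)$ -- the inclusion $\mathcal{P}_k(P)\supseteq$ that closure is immediate from the above, and the reverse should follow because each width-$\le f(k)$ lattice-free cut lives in a $\le k$-dimensional rational subspace and can be reabsorbed into the projection family -- and then invoke polyhedrality of bounded-max-facet-width lattice-free closures, a statement that would strengthen both \cite{cook-kannan-schrijver} and Theorem~\ref{thm:2-halfspace-poly}.

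The main obstacle is exactly the step that was automatic for $k=2$: in dimension three and higher there is no ``penultimate'' simple closure. The split and Chv\'atal-Gomory ranks are finite for each fixed rational polyhedron but unbounded over $k$-dimensional rational polyhedra, and since the projections $P_L$ run over polyhedra of unbounded encoding length (as $L$ ranges over lattice subspaces with larger and larger denominators), the residual cuts from $P_{I,L}$ are in general not Chv\'atal-Gomory cuts -- or cuts of any fixed rank -- for $\mathcal{S}(P)$, so the $\mathcal{B}_k(P)=\mathcal{S}(P)$ route does not close. One must therefore prove a genuinely new finiteness statement: either that $\{P_{I,L}+L^\perp:L\in\mathcal{L}_k\}$ admits a finite dominating subfamily, or the equivalent claim that the $\le k$-dimensional lattice-free cylinder closure is polyhedral. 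My suggested line of attack is to exploit that the $P_L$, though of unbounded complexity, have at most $|\mathrm{vert}(P)|$ vertices uniformly in $L$, and that each new facet of $P_{I,L}$ comes from a lattice-free set of width $\le f(k)$ that must hug $P_L$; one would try to partition all such configurations into finitely many combinatorial types of the arrangement formed by $P_L$ and the relevant bounded-width lattice-free sets, then run a compactness argument within each type, much as in the polyhedrality proof for the split closure. Controlling how the lattice $\Lambda_L$ and these lattice-free sets deform as $L$ varies is where I expect the real difficulty to lie.
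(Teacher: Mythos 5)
The statement you are trying to prove is Conjecture~3 of the paper: it is posed as an open problem, and the paper contains no proof of it. So there is no proof to compare against, and your proposal does not close the gap either: by your own account, the decisive step --- a finiteness/domination statement for the family $\{P_{I,L}+L^\perp : L\in\mathcal{L}_k\}$ relative to some polyhedral base relaxation $\mathcal{B}_k(P)$ --- is left unproven, and the concluding sketch (``finitely many combinatorial types of configurations, then a compactness argument within each type'') is a research plan, not an argument. Your preliminary observations are fine ($\mathcal{P}_n(P)=P_I\subseteq\mathcal{P}_k(P)\subseteq\mathcal{P}_2(P)=\mathcal{T}(P)$, and each facet of $P_{I,L}$ cutting into $P_L$ comes from a $\Lambda_L$-free convex body in $L$, whose lattice width is bounded by the flatness constant in dimension $k$), and you correctly diagnose why the paper's $k=2$ template breaks: Theorem~\ref{thm:2-halfspace-poly} rests on Theorem~\ref{thm:rank-IH} (``split closure, then CG closure'' suffices in dimension two), which makes the residual cuts CG cuts for $\mathcal{S}(P)$ and lets \cite{MR2969261} finish; for $k\geq 3$ the split/CG rank of $k$-dimensional polyhedra is unbounded, so taking $\mathcal{B}_k(P)=\mathcal{S}(P)$ leaves cuts that are not finitely dominated by any fixed-rank machinery. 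But diagnosing the obstacle is not overcoming it.

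Beyond the missing core, two specific steps in your alternative route are unsound or unjustified. First, the proposed identification of $\mathcal{P}_k(P)$ with a bounded-width lattice-free closure fails in the direction you call ``the reverse'': a lattice-free set in $\R^n$ of bounded (lattice or max-facet) width is in general full-dimensional and is \emph{not} a cylinder $B+L^\perp$ over a $k$-dimensional base, so its cut cannot be ``reabsorbed into the projection family''; only the inclusion (bounded-width lattice-free closure) $\subseteq\mathcal{P}_k(P)$ is available, and a polyhedral subset of $\mathcal{P}_k(P)$ says nothing about polyhedrality of $\mathcal{P}_k(P)$ itself. Second, even for the inclusion you do get, flatness bounds the \emph{lattice width} of the relevant $\Lambda_L$-free bodies, whereas the polyhedrality results you would want to invoke (in the spirit of \cite{dash2017polyhedrality}) concern families with bounded \emph{max-facet-width}; bounding the max-facet-width of the maximal $\Lambda_L$-free sets arising here is an additional claim you would have to prove. (Minor further point: the reduction ``one may first treat the case that $P$ is a polytope'' via $\rec(\mathcal{P}_k(P))=\rec(P_I)$ needs care, e.g.\ when $P_I=\emptyset$, and equality of recession cones alone does not reduce polyhedrality to the bounded case.) In short, what you have is a sensible program consistent with the paper's closing discussion, but Conjecture~3 remains open and your proposal does not prove it.
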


We feel these questions in discrete geometry are worth pursuing in the future. { Finally, we would like to mention that the Ph.D. thesis of Wolfgang Keller addresses similar issues and defines various closely related cutting plane closures~\cite{keller2019tightening}. This thesis and the closures defined in this paper open up a number of questions regarding the relationship and relative strengths of these various cutting plane strategies.}


\section*{Acknowledgement}
We are very grateful to two anonymous referees for very insightful comments. Their suggestions and pointers helped to improve the paper from its initial versions. In particular, one of the referees suggested a shorter and more elegant proof for Theorem~\ref{thm:proj} which we adopted.

\bibliographystyle{plain}
\bibliography{full-bib}

\appendix

\section{Integer hull of two dimensional simplicial cones}


\begin{definition}\label{def:ell}
Given a simplicial cone $C{\subseteq} \R^2$, let $F^1,\dots, F^n$ be the facets of $C_I$. For every $i\in\{1,\dots,n\}$ we denote by $H^i$ the halfspace defining $F^i$, which can be described as $\{x\in \R^2:\langle a_i,x \rangle\leq \delta_i\}$, for some $a_i\in \Z^2$ of which the two entries are coprime and $\delta_i\in \Z$. Furthermore, we define $\widehat{\ell^i}$ as $\{x\in \R^2:\langle a_i,x \rangle= \delta_i+1\}$.
Also, let $\widehat{H^i}$ denote the halfspace described by $\{x\in \R^2:\langle a_i,x \rangle\leq \delta_i+1\}$.
\end{definition}


\begin{definition}\label{def:unit}
Given a line {$\ell\subseteq \R^2$} containing integer points, we call each closed segment whose endpoints are two consecutive integer points of $\ell$ as a {\it unit interval of $\ell$}.
\end{definition}
\begin{theorem}\label{thm:rank-IH}
 Given a simplicial cone $C\in \R^2$, the integer hull of  $C$ can be derived by taking the split closure of $C$, and then taking the CG closure of this split closure. 
\end{theorem}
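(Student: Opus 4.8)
The plan is to show a containment in each direction between $C_I$ and $\mathcal{C}(\mathcal{S}(C))$; the nontrivial direction is $\mathcal{C}(\mathcal{S}(C)) \subseteq C_I$, i.e., that split cuts followed by one round of CG cuts already suffice to recover the two-dimensional integer hull. Since $\mathcal{S}(C) \supseteq C_I$ and $\mathcal{C}(\cdot)$ is monotone, it is enough to produce, for each facet-defining halfspace $H^i = \{\langle a_i, x\rangle \le \delta_i\}$ of $C_I$ (notation as in Definition~\ref{def:ell}), a split set $S_i$ for $C$ and then a CG cut for $\conv(C \setminus \intt(S_i))$ that is contained in $H^i$. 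I would take the split set $S_i$ associated to the split disjunction $\langle a_i, x\rangle \le \delta_i \ \vee\ \langle a_i, x\rangle \ge \delta_i + 1$, i.e., $S_i = \{x : \delta_i \le \langle a_i, x\rangle \le \delta_i+1\}$; note $\intt(S_i) \cap \Z^2 = \emptyset$ because $a_i$ has coprime entries, so this is a legitimate split set.

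The key geometric step is to understand $\conv(C \setminus \intt(S_i))$. The set $C \setminus \intt(S_i)$ has two pieces: the part $C \cap H^i$ already on the correct side, and the part $C \cap \widehat{H^i}^c$ (where $\langle a_i, x\rangle \ge \delta_i+1$), which is the ``tip'' of the cone sticking out past the line $\widehat{\ell^i} = \{\langle a_i,x\rangle = \delta_i+1\}$. Because $C$ is a simplicial cone in $\R^2$, this tip is itself a (possibly truncated) cone, and $C \cap \widehat{H^i}$ meets $\widehat{\ell^i}$ in a bounded segment whose endpoints lie on the two rays of $C$. The crucial claim is that this segment has length at most one in the lattice $\Z^2 \cap \widehat{\ell^i}$ — equivalently, it is contained in the union of at most two adjacent unit intervals of $\widehat{\ell^i}$ — so that after convexifying with the tip, $\conv(C\setminus\intt(S_i))$ is contained in $\widehat{H^i}$ plus possibly one extra ``ear''. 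Then the CG cut obtained from the halfspace $\widehat{H^i} \supseteq \conv(C \setminus \intt(S_i))$ is exactly $\{x : \langle a_i,x\rangle \le \lfloor \delta_i+1\rfloor\} = H^i$ (since $\delta_i \in \Z$), giving the desired inclusion. I expect the main obstacle to be proving this length-at-most-one claim: this is precisely where the structure of $C_I$ enters — $H^i$ being a facet of the integer hull forces the line $\widehat{\ell^i}$ (one lattice step out) to cut the cone $C$ in a very short segment, and making this rigorous will require a careful argument about why a facet of the 2D integer hull cannot be ``peeled back'' by more than one lattice layer without hitting an integer point, likely using that all integer points of $C$ on the $H^i$ side are captured and that $F^i$ being a genuine facet means there are lattice points of $C_I$ on $\ell^i = \{\langle a_i,x\rangle = \delta_i\}$.

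Once the per-facet statement is in hand, I would finish by noting: every facet-defining halfspace of $C_I$ arises as a CG cut of some $\conv(C \setminus \intt(S_i)) \supseteq \mathcal{S}(C)$, hence as a CG cut of $\mathcal{S}(C)$ by monotonicity; therefore $\mathcal{C}(\mathcal{S}(C)) \subseteq \bigcap_i H^i = C_I$. The reverse inclusion $C_I \subseteq \mathcal{C}(\mathcal{S}(C))$ is immediate since $C_I \subseteq \mathcal{S}(C)$ and $C_I$ is integer-hull-closed, so $C_I = (C_I)_I \subseteq \mathcal{C}(\mathcal{S}(C))$ is not quite the right phrasing — rather, $C_I \subseteq Q$ for every CG cut $Q$ of $\mathcal{S}(C)$ because such cuts are valid for $\mathcal{S}(C) \supseteq C_I$ and preserve all integer points. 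Combining the two directions yields $\mathcal{C}(\mathcal{S}(C)) = C_I$, which is the theorem. A small additional check is that $C_I$ has finitely many facets (standard for rational polyhedra), so the intersection $\bigcap_i H^i$ is well-defined and finite.
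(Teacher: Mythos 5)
Your overall strategy (derive each facet-defining halfspace $H^i$ of $C_I$ as a CG cut for a relaxation containing $\mathcal{S}(C)$, then intersect) is the right shape, but the key step fails because of the split set you chose. Taking $S_i=\{x:\delta_i\le\langle a_i,x\rangle\le\delta_i+1\}$ in the facet-normal direction does not help: the disjunction keeps the tip $C\cap\{\langle a_i,x\rangle\ge\delta_i+1\}$, so $\conv(C\setminus\intt(S_i))$ still contains the apex $v$ of $C$ whenever $\langle a_i,v\rangle>\delta_i+1$, and your claimed containment $\widehat{H^i}\supseteq\conv(C\setminus\intt(S_i))$ is simply false in that case; the tightest valid inequality in direction $a_i$ for this relaxation has right-hand side $\langle a_i,v\rangle$, so the best CG cut you can extract is $\langle a_i,x\rangle\le\lfloor\langle a_i,v\rangle\rfloor$, not $H^i$. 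The paper's own Example \ref{exp::strict-contain} is a concrete counterexample: for the facet $x_2\le 0$ of $P_I$, the split $0\le x_2\le 1$ leaves the apex $(1/2,3/2)$ in $\conv(P\setminus\intt(S))$ and yields only $x_2\le 1$. There is a second, independent error in the rounding step: even if $\conv(C\setminus\intt(S_i))\subseteq\widehat{H^i}$ held, $\lfloor\delta_i+1\rfloor=\delta_i+1$ because $\delta_i\in\Z$, so the CG cut obtained from $\langle a_i,x\rangle\le\delta_i+1$ is $\widehat{H^i}$ itself, not $H^i$; to get $H^i$ as a CG cut you need a valid right-hand side \emph{strictly} below $\delta_i+1$, i.e., you must show the relaxation avoids the line $\widehat{\ell^i}$ entirely.

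This is exactly where the paper works differently, and why the theorem needs the full split closure before the CG round. Note first that your anticipated ``crucial claim'' is immediate and stronger than you state: since $H^i$ is valid for $C\cap\Z^2$, the segment $C\cap\widehat{\ell^i}$ contains no integer point, hence lies in the interior of a \emph{single} unit interval $U$ of $\widehat{\ell^i}$ — but this alone does not rescue your derivation. The paper instead picks a \emph{transverse} split set: taking a unit interval $U'$ on the facet $F^i$ and the split set containing the apex of $C$ together with $\conv(U\cup U')$, the induced split cut removes $C\cap\widehat{\ell^i}$, giving $\mathcal{S}(C)\subseteq\widehat{H^i}\setminus\widehat{\ell^i}$; only then does $H^i$ become a CG cut for $\mathcal{S}(C)$. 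So splits in directions other than $a_i$ are essential, and any proof that relies solely on the split parallel to the target facet cannot work.
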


\begin{proof}

We use the same notations as in Definition \ref{def:ell}. We first verify that $\mathcal{S}(C)\subseteq  \widehat{H^i}$ for every $i=1, \ldots, n$. Given $i\in \{1,\ldots,n\}$, if $\widehat{l^i}$ does not intersect with $C$, then $C$ is contained in $\widehat{H^i}$ and we are done. If $\widehat{l^i}$ intersects with $C$. Then there exists a unique unit interval $U$ of $\widehat{l^i}$ intersecting $C$. By definition of $\widehat{l^i}$, the two integer points of $U$ are outside of $C$. Also, there exists at least one unit interval on $F^i$. Let $U'$ be one of them. Then we take the unique split set $S$ containing the apex of $C$ and $\conv(U\cup U')$. The two intersection points between the boundaries of $S$ and the boundaries of $C$ determine a split cut $H$ produced by $S$, which cuts off $U\cap C$. Thus $\mathcal{S}(C)\subseteq H\cap C\subseteq \widehat{H^i}{\backslash \widehat{l^i}}$.

Thus by the definition of $\widehat{H^i}$, the facet defining halfspace $H^i$ of $F^i$ is a CG cut for $\mathcal{S}(C)$, for $i\in \{1,\ldots,n\}$. This finishes the proof.\qed
\end{proof}

\section{General integer hulls in two dimensions}\label{sec:2DIH}

We give a new proof of the following result.

\begin{theorem}\label{thm:2DIH}\cite{bell1974intersections,bell1975improved}
Let $P\subseteq \R^2$ be any two-dimensional rational polyhedron. Then $P_I$ is equal to the intersection of the integer hulls of all the simplicial cones (or split sets) constructed from pairs of facets of $P$.
\end{theorem}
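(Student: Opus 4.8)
The plan is to prove the two inclusions separately. The inclusion ``$\subseteq$'' is immediate: since $P \subseteq H_i \cap H_j$ for every pair of facets $F_i,F_j$ of $P$, we get $P_I \subseteq (H_i\cap H_j)_I$, and intersecting over all pairs gives $P_I \subseteq \bigcap_{\{i,j\}}(H_i\cap H_j)_I$. For the reverse inclusion I would reduce to inequalities: $P_I$ is a rational polyhedron, so it is the intersection of its finitely many facet-defining inequalities, each of which can be written as $\langle c,x\rangle \le d$ with $c\in\Z^2$ primitive and $d\in\Z$ (the right-hand side is integral because it is attained at a vertex of $P_I$, an integer point). Hence it suffices to show: for every such valid inequality $\langle c,x\rangle\le d$ for $P_I$, there is a pair of facets $F_i,F_j$ of $P$ with $(H_i\cap H_j)_I\subseteq\{x:\langle c,x\rangle\le d\}$. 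Since $\langle c,\cdot\rangle$ is integer-valued on $\Z^2$ and $d\in\Z$, this is equivalent to the requirement that the polyhedron $H_i\cap H_j\cap\{\langle c,x\rangle\ge d+1\}$ contain no integer point.

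The starting observation is that $P\cap\{\langle c,x\rangle\ge d+1\}$ already contains no integer point, because $\langle c,x\rangle\le d$ is valid on $P\cap\Z^2$. If, in addition, the line $\{\langle c,x\rangle=d+1\}$ misses $P$ entirely, then $\delta:=\max_{x\in P}\langle c,x\rangle\in[d,d+1)$ (this maximum is finite, since otherwise $\langle c,x\rangle\le d$ would fail on $P_I$), and the $\langle c,\cdot\rangle$-maximal face of $P$ is either a facet $F_i$ with outer normal $c$, in which case $(H_i)_I=\{\langle c,x\rangle\le\lfloor\delta\rfloor\}=\{\langle c,x\rangle\le d\}$ and any pair containing $F_i$ works, or a vertex $w$ incident to two facets $F_i,F_j$, in which case the simplicial cone $H_i\cap H_j$ has apex $w$ with $\langle c,w\rangle<d+1$ and recedes away from $\{\langle c,x\rangle=d+1\}$, so that $H_i\cap H_j\cap\{\langle c,x\rangle\ge d+1\}=\emptyset$. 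Either way this case is finished.

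The heart of the matter is the remaining case, where $R:=P\cap\{\langle c,x\rangle\ge d+1\}$ is a nonempty lattice-free ``cap'' sitting above the lattice line $\{\langle c,x\rangle=d+1\}$. Here I would invoke the classification of maximal lattice-free convex sets in $\R^2$ (split sets, triangles with a lattice point in the relative interior of each edge, or quadrilaterals with one lattice point in the relative interior of each edge): $R$ lies inside such a body $M$. The goal is to strip $P$'s description down to two facets while keeping lattice-freeness above the line, i.e.\ to find $F_i,F_j$ with $H_i\cap H_j\cap\{\langle c,x\rangle\ge d+1\}$ still lattice-free. Using Farkas' lemma — every valid inequality for $R=P\cap\{\langle c,x\rangle\ge d+1\}$ is a nonnegative combination of $P$'s facet inequalities together with $\langle c,x\rangle\ge d+1$ — and the two-dimensional Carathéodory theorem for cones — such a combination can be realized with at most two of these inequalities — one aims to exhibit two facet inequalities of $P$ that already cut $M$ (hence $R$) down to a lattice-free region above the line; one must also control the transverse lattice width of the cap (a lattice-free cap above a lattice line is ``thin'' either parallel or transverse to that line, by concavity of its parallel-chord-length function and the absence of lattice points), so that the two chosen facets force lattice-freeness of the whole cone-slice above $\{\langle c,x\rangle=d+1\}$. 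One could alternatively try to route through Theorem~\ref{thm:rank-IH} together with facts about the split closure in the plane, but this meets the same difficulty.

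The main obstacle, in any approach, is precisely this ``localization to a pair of facets'': the lattice-freeness of the cap $P\cap\{\langle c,x\rangle\ge d+1\}$ — a priori involving all of $P$'s defining inequalities — must be shown to be forced by just two of them. This is special to the plane; the analogous statement is known to fail in dimension three and higher (cf.\ the discussion surrounding Conjecture~2), so the two-dimensionality must be used essentially, which is exactly where the classification of maximal lattice-free sets and two-dimensional Carathéodory enter. Finally, a few degenerate situations — $P_I$ not full-dimensional or empty, and pairs of parallel facets yielding a slab rather than a pointed cone — are handled by the same reasoning, noting that enlarging the collection of pairs on the right-hand side only shrinks the intersection, so the sandwich $P_I\subseteq\bigcap_{\{i,j\}}(H_i\cap H_j)_I\subseteq P_I$ is unaffected.
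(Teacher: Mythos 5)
Your easy direction and the reduction to facet-defining inequalities of $P_I$ are fine, but the argument has two genuine gaps. First, the step you yourself call ``the heart of the matter'' is only a plan, and the tools you propose for it cannot close it: Farkas' lemma and Carath\'eodory for cones are statements about real validity, whereas what you need is a lattice-point statement, namely that $H_i\cap H_j\cap\{\langle c,x\rangle\ge d+1\}$ contains \emph{no integer point}. A valid inequality for the cap being a nonnegative combination of two of its defining inequalities says nothing about integer points of the relaxation obtained by dropping the others. The correct localization tool is Doignon's integer Helly theorem (Theorem~\ref{thm::helly}): since the cap has no integer points, already at most $2^2=4$ of its defining halfspaces have no common integer point, i.e.\ \emph{three} facets of $P$ together with the cut $\{\langle c,x\rangle\ge d+1\}$ (after perturbing $P$ so that no facet line of $P$ contains integer points, cf.\ Corollary~\ref{cor::one-facet-helly}). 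Even after this reduction one is not down to a single pair: the remaining four-constraint configuration is handled in the paper by Lemma~\ref{lem::valid-ineq}, which ``pushes out'' the three facet halfspaces until one reaches a maximal lattice-free quadrilateral as in Theorem~\ref{thm::lat-free-2D}, and its conclusion is only that $\{\langle c,x\rangle\le d\}$ contains the \emph{intersection} $(H_2\cap H_3)_I\cap(H_2\cap H_4)_I\cap(H_3\cap H_4)_I$ of the hulls of three different pairs. Your reduction demands the strictly stronger statement that a \emph{single} pair $F_i,F_j$ satisfies $(H_i\cap H_j)_I\subseteq\{\langle c,x\rangle\le d\}$; in the quadrilateral case this is exactly what fails in general (the paper needs two pairs, via the auxiliary halfspaces $H_5,H_6$ through the four lattice points of the unit parallelogram), so you are trying to prove something that is most likely false, and in any case you have not proved it.

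Second, the case $P_I=\emptyset$ is not ``handled by the same reasoning'': your whole scheme argues inequality by inequality for $P_I$, which gives nothing when $P_I$ is empty, since then you must show the intersection of all $(H_i\cap H_j)_I$ is itself empty. The paper needs a separate argument (Theorem~\ref{thm::empty-IH}) that first invokes integer Helly to reduce to at most four facets and then bootstraps from the nonempty case. So the overall skeleton (easy inclusion, reduction to valid inequalities, appeal to the classification of maximal lattice-free sets) points in the right direction, but the decisive steps --- integer Helly in place of real Farkas/Carath\'eodory, the push-out argument for the four-constraint lattice-free configuration, allowing several pairs to jointly certify one inequality, and the empty-hull case --- are missing.
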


We break the proof into two cases: $P_I$ is nonempty or empty. These are dealt with in Theorems~\ref{thm::non-empty-IH} and~\ref{thm::empty-IH} below.

\begin{definition}
A convex set $B\subseteq \R^n$ is a lattice-free convex set if there is no integer point in its interior, and it is maximal if for any lattice-free set $B'\supseteq B$, we have $B'=B$. 
\end{definition}

\begin{theorem}\label{thm::lat-free-2D}
In $\R^2$, a convex set $B$ is a maximal lattice-free convex set if and only if it satisfies one of the following properties. 

\begin{enumerate}
    \item $B$ is a split set, and each of its facets contains integer points.
    \item $B$ is a triangle such that each of its facets contains at least one integer point in its relative interior. 
    \item $B$ is a four-facet lattice-free set and each of its facet contains exactly one integer point in its relative interior. Moreover, the convex hull of the union of these four integer points is a parallelogram with area $1$.
\end{enumerate}
\end{theorem}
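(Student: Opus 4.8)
The plan is to prove both directions of the classification of maximal lattice-free convex sets in $\R^2$, following the classical line of argument (Lovász's theorem). First I would establish that a maximal lattice-free convex set $B$ in $\R^2$ must be a polyhedron: if $B$ were not polyhedral, one could still show it is contained in a maximal lattice-free \emph{polyhedron}, contradicting maximality; more directly, a maximal lattice-free convex set is the intersection of the halfspaces through the integer points on its boundary, and there can be only finitely many facets because the recession cone is constrained. I would also record that $B$ must be full-dimensional (a lower-dimensional set is never maximal, since it can be fattened while staying lattice-free along a rational direction), and that every facet of $B$ must contain an integer point in its relative interior (or, for an unbounded facet, an integer point): otherwise one could translate that facet outward and strictly enlarge $B$ while preserving lattice-freeness.

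Next I would bound the number of facets. The key step is to show $B$ has at most four facets, and to pin down the integer-point structure on the facets. Consider the integer points lying in the relative interiors of the facets (one per facet by the maximality argument above, except a split set facet may carry an entire line of integer points). Take two such integer points $p, q$ on two distinct facets. The open segment between them lies in the interior of $B$, hence contains no integer point, so $p$ and $q$ are ``visible'' to each other; by a standard parity/convexity argument the midpoint $\frac{1}{2}(p+q)$ is either an integer point (impossible, as it lies in $\intt(B)$) or the two facets are translates of one another along a primitive lattice direction --- this forces the split-set case. If $B$ is bounded, one enumerates: the integer points picked on the facets span a lattice-free polytope of their own whose vertices are primitive-spaced; Pick's theorem then forces the convex hull of these points to be either a triangle of lattice area $1$ or $2$, or a parallelogram of area $1$, giving exactly cases~2 and~3. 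The unbounded full-dimensional case with a nontrivial recession cone reduces, after noting the recession cone must be a line (a pointed unbounded lattice-free set of dimension $2$ cannot be maximal), to the split-set case~1.

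For the converse direction I would verify that each of the three listed families is indeed lattice-free and maximal. Lattice-freeness is immediate in each case (a split set $\{K \le \langle a,x\rangle \le K+1\}$ with $a$ primitive has no integer point in its open interior; for the triangle and quadrilateral one checks the prescribed boundary integer points leave the interior empty, using the area conditions via Pick's theorem). Maximality is the substantive check: any proper enlargement $B' \supsetneq B$ must move at least one facet-defining halfspace strictly outward, and I would show that in each of the three configurations this sweeps an integer point (one of the prescribed boundary integer points, or in the split case a point of the bounding lattice line) into $\intt(B')$. The main obstacle I anticipate is the enumeration in the bounded case --- correctly arguing that no lattice-free polygon in $\R^2$ can have five or more facets, and that the only possibilities are exactly the triangle-with-area-$\le 2$ and the area-$1$ parallelogram. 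This is where one must carefully combine convexity, the ``midpoint'' visibility argument, and Pick's theorem; once the combinatorial possibilities are narrowed down, the remaining verifications are routine.
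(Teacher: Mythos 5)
The paper itself states Theorem~\ref{thm::lat-free-2D} without proof: it is the classical classification of maximal lattice-free convex sets in the plane (Lov\'asz's theorem, with the unit-parallelogram refinement for quadrilaterals), quoted as a known tool for the appendix. So there is no in-paper argument to compare against; your plan has to be judged on its own. Its overall route --- maximality forces a full-dimensional polyhedron each of whose facets carries an integer point in its relative interior, a parity/pigeonhole bound on the number of facets, a Pick-type enumeration for the bounded case, reduction of the unbounded case to splits, and a routine verification of maximality for the three families --- is indeed the standard one and can be made to work.

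However, as written the decisive steps contain genuine gaps. First, the counting argument is misstated: the dichotomy ``the midpoint of $p,q$ is integral or the two facets are translates along a primitive lattice direction'' is not a correct statement and does not bound the number of facets. What is needed is: each facet contains an integer point in its relative interior (by maximality); points in relative interiors of distinct facets see each other through $\intt(B)$; and if $B$ had five or more facets, two of the chosen points would be congruent mod $2$, so their integral midpoint would lie in $\intt(B)$, a contradiction --- this is what gives at most four facets. Your phrasing ``no lattice-free polygon in $\R^2$ can have five or more facets'' is false without maximality (thin slivers have arbitrarily many facets). Second, the Pick-theorem enumeration is wrong for the triangle case and incomplete for the quadrilateral case: case~2 of the theorem carries no area restriction (maximal lattice-free triangles of arbitrarily large area exist), while for case~3 you must argue that the four chosen points are in convex position, that no three are collinear, that their hull contains no lattice points other than its vertices (open segments between points on distinct facets lie in $\intt(B)$), hence has area $1$ by Pick's formula, and that it is a parallelogram because each diagonal splits it into two lattice triangles of minimal area $\tfrac12$, forcing opposite sides to be parallel; none of this is in the sketch. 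Third, polyhedrality, full-dimensionality, and ``the recession cone of an unbounded maximal set must be a line'' are asserted rather than proved; ruling out a pointed nontrivial recession cone requires, e.g., observing that every open halfplane and every strip in an irrational direction contains integer points. Until these steps are supplied, the proposal is an outline of the known proof rather than a proof.
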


\begin{lemma}\label{lem::valid-ineq}
Let $Q\in \R^2$ be a rational polyhedron such that $\intt(Q)\cap \Z^2=\emptyset$, and $Q$ has four facets $F_1$, $F_2$, $F_3$ and $F_4$ in clockwise order. Let $H_i$ and $l_i$ denote the corresponding facet-defining halfspace and hyperplane for $i=1,2,3,4$. Assume $F_1$ contains at least one integer point in its relative interior, and $l_i\cap \Z^2=\emptyset$ for $i=2,3,4$. Let the corresponding facet defining halfspace $H_i$ be $\{x\in\R^2:\langle a_i,x\rangle\leq \delta_i\}$ for $i=1,2,3,4$. Then  $H:=\{x\in\R^2:\langle a_1,x\rangle\geq \delta_1\}\supseteq (H_2\cap H_3)_I\cap (H_2\cap H_4)_I\cap (H_3\cap H_4)_I$.  
\end{lemma}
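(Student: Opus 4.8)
The plan is to show that the halfspace $H = \{x : \langle a_1, x\rangle \geq \delta_1\}$, i.e.\ the closed side of $l_1$ that does \emph{not} contain $\intt(Q)$, is valid for the intersection $R := (H_2\cap H_3)_I\cap (H_2\cap H_4)_I\cap (H_3\cap H_4)_I$. First I would set up coordinates so that the geometry is transparent: $Q$ is a four-facet lattice-free polyhedron with $F_1, F_2, F_3, F_4$ in clockwise order, $F_1$ carries an integer point in its relative interior, and $l_2, l_3, l_4$ are integer-point-free lines. The key structural observation is that any integer point $z$ must lie outside $\intt(Q)$, hence $z$ violates at least one of the four facet inequalities; so $\Z^2 \subseteq H \cup (\R^2\setminus H_2) \cup (\R^2\setminus H_3)\cup (\R^2\setminus H_4)$, but since $l_2, l_3, l_4$ miss $\Z^2$ we can sharpen this to $\Z^2 \subseteq H \cup \{x:\langle a_2,x\rangle > \delta_2\}\cup\{x:\langle a_3,x\rangle>\delta_3\}\cup\{x:\langle a_4,x\rangle>\delta_4\}$.

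Next I would argue by contradiction: suppose there is a point $p \in R$ with $p \notin H$, i.e.\ $\langle a_1, p\rangle < \delta_1$, so $p$ lies strictly on the $\intt(Q)$ side of $l_1$. The membership $p \in R$ means $p$ is a convex combination of integer points of $H_2\cap H_3$, a convex combination of integer points of $H_2\cap H_4$, and a convex combination of integer points of $H_3 \cap H_4$. I want to derive that $p$ can then be ``pushed'' into $\intt(Q)\cap \Z^2$-coverage in a way that contradicts lattice-freeness of $Q$, or more directly, that $p$ cannot be simultaneously on the far side of $l_1$ and expressible via integer points avoiding the respective halfspace pairs. The clean way: consider the region $Q^- := Q \cap H = Q \cap \{x : \langle a_1, x\rangle = \delta_1\}$... no --- rather, consider the ``cap'' $C := \{x : \langle a_1, x\rangle < \delta_1\} \cap H_2 \cap H_3 \cap H_4$, which is exactly $\intt_{l_1\text{-side}}$ of $Q$, a lattice-free region whose only boundary facet carrying integer points is (the relative interior of) $F_1$. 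Any integer point in $H_2 \cap H_3$ that lies strictly below $l_1$ must lie in $Q$ (it satisfies all of $H_2, H_3$, and being below $l_1$ it satisfies $H_1$; the only missing constraint is $H_4$, but an integer point in $\intt(Q)$ is forbidden, so it lies on $l_4$ or outside $H_4$ --- and $l_4\cap\Z^2=\emptyset$ forces it outside $H_4$). Pinning this down for each of the three halfspace pairs is the crux.

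The main obstacle I anticipate is the combinatorial bookkeeping of \emph{which} integer points can appear in the three convex-combination representations of $p$ and \emph{where} they sit relative to $l_1$ and the three lines $l_2, l_3, l_4$: one must rule out that $p$, while on the wrong side of $l_1$, is nonetheless a convex combination of far-flung integer points skirting the forbidden open halfspaces. I expect the resolution to use Theorem~\ref{thm::lat-free-2D} (the classification of maximal lattice-free sets in $\R^2$) to control the shape of $Q$ --- in particular that $Q$ is contained in a maximal lattice-free set of type 2 or 3, so the integer points on its boundary are few and their convex hull has area exactly $1$ (type 3) or they lie one-per-facet (type 2). With that, each of the three integer hulls $(H_i\cap H_j)_I$ restricted to the slab below $l_1$ is governed by a bounded, explicit configuration, and a direct check shows their common intersection cannot poke below $l_1$. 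I would organize the final argument as: (i) reduce to $Q$ maximal lattice-free via inclusion; (ii) invoke Theorem~\ref{thm::lat-free-2D} to enumerate the at most four boundary integer points; (iii) for a hypothetical $p\in R\setminus H$, locate $p$ inside the triangle/quadrilateral and exhibit a facet inequality among $H_2, H_3, H_4$ that every integer point defining $p$'s representation must violate strictly, contradicting $p$'s membership in the corresponding integer hull.
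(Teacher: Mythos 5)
Your proposal assembles the right ingredients (lattice-freeness of the cap below $l_1$, the observation that an integer point of $H_2\cap H_3$ lying strictly inside $H_1$ must lie strictly outside $H_4$, and the classification of maximal lattice-free sets), but it stops exactly at the crux. Step (iii) --- ``a direct check shows their common intersection cannot poke below $l_1$'' --- is precisely the content of the lemma, and no such check is carried out: knowing that each integer hull's points strictly inside $H_1$ violate the remaining halfspace does not by itself prevent a convex combination from landing strictly inside $H_1$; one needs a quantitative lattice fact to produce actual valid inequalities. In the paper this fact is the area-$1$ parallelogram of Case 3 of Theorem~\ref{thm::lat-free-2D}: after the push-out one has four integer points $v_1,\dots,v_4$, one per facet, forming a fundamental parallelogram, and the lines through $\{v_1,v_2\}$ and $\{v_3,v_4\}$ (resp.\ through $\{v_1,v_4\}$) give explicit halfspaces $H_5$, $H_6$ with $(H_3\cap H_4)_I\subseteq H_5$, $(H_2\cap H_3)_I\subseteq H_6$ and $H_5\cap H_6\subseteq H$, because no integer points lie strictly between the two parallel lines of the fundamental parallelogram and $v_3,v_4$ themselves are outside the pushed-in halfspaces. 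Without this (or an equivalent) construction your contradiction argument has no engine.

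Your step (i), ``reduce to $Q$ maximal lattice-free via inclusion,'' is also not as innocent as stated. An arbitrary maximal lattice-free superset of $Q$ need not have facets parallel to $F_2,F_3,F_4$, need not have a facet on $l_1$ at all (so the halfspace $H$ in the statement loses its meaning for the superset), and may be a split or a triangle, in which case the pairing of facets in the conclusion breaks down. The paper instead pushes out the specific halfspaces $H_3,H_2,H_4$ one at a time in their own fixed normal directions $a_3,a_2,a_4$, keeping $H_1$ fixed (it cannot move, since $F_1$ already has an integer point in its relative interior), and at each step handles the degenerate alternative explicitly: if a halfspace can be removed entirely without admitting new integer points, the conclusion already follows from a single pair, e.g.\ $H\supseteq (H_2\cap H_4)_I$ when $H_3$ is removable, because all integer points of $H_2\cap H_4$ inside $H_1$ then lie on $F_1\subseteq H$. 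Only when all three push-outs terminate at integer points does one land in the quadrilateral Case 3 and run the parallelogram argument. Your sketch neither preserves the data ($a_2,a_3,a_4$, $H$) under the reduction nor treats these escape cases, so as written the argument has a genuine gap at both ends: the reduction and the final check.
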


\begin{proof}
We will attempt to construct a maximal lattice-free set containing $Q$ by ``pushing its facets out". More formally, we do the following.

If $(H_1 \cap H_2 \cap H_4) \cap \Z^2 = (H_1 \cap H_2 \cap H_3 \cap H_4) \cap \Z^2$, i.e., removing $H_3$ does not change the set of integer points, then $H \supseteq (H_2\cap H_4)_I$ and we are done. Suppose then that $(H_1 \cap H_2 \cap H_4) \cap \Z^2$ contains integer points that are not in $(H_1 \cap H_2 \cap H_3 \cap H_4) \cap \Z^2$. All such integer points must be in the interior of $H_1 \cap H_2 \cap H_4$, since $\ell_2$ and $\ell_4$ do not contain integer points. Therefore, there exists $\delta'_3 > \delta_3$ such that if one defines $H'_3 = \{x \in \R^2: \langle a_3, x \rangle \leq \delta'_3\}$, then $H_1 \cap H_2 \cap H'_3 \cap H_4$ is also lattice free, but the facet corresponding to $H'_3$ contains integer points in its relative interior. One now checks if removing $H_2$ introduces new integer points in $H_1 \cap H_2 \cap H'_3 \cap H_4$. If not, then we observe that $H \supseteq (H_3\cap H_4)_I$ since $\delta'_3 > \delta_3$ and we are done. Otherwise, we find $\delta'_2 > \delta_2$ such that if one defines $H'_2 = \{x \in \R^2: \langle a_2, x \rangle \leq \delta'_2\}$, then $H_1 \cap H'_2 \cap H'_3 \cap H_4$ is also lattice free, but the facet corresponding to $H'_2$ contains integer points in its relative interior. Finally, we ``push out" $H_4$ and either realize that $H \supseteq (H_2\cap H_3)_I$, or end up with a maximal lattice-free quadrilateral satisfying Case 3. in Theorem~\ref{thm::lat-free-2D}.


Let us make the notation uniform and use $H_k'$ and $\ell_k'$ denote the corresponding facet defining halfspace and hyperplane, and $v_j$ be the integer point located on the corresponding facet, for $j=1,2,3,4$.  By assumption, $\ell_k\neq \ell_k'$ for $k=2,3,4$. Let $\ell_5$, $\ell_5'$ and $\ell_6$ be the lines such that $\{v_1,v_2\}\subseteq \ell_5$, $\{v_3,v_4\}\subseteq \ell_5'$, and $\{v_1,v_4\}\subseteq \ell_6$. Furthermore, let $H_5$ be the halfspace defined by $\ell_5$ such that $v_4\notin H_5$. Similarly $H_6$ be the halfspace defined by $\ell_6$ such that $v_2\notin H_6$.  Since there are no integer points between $\ell_5$ and $\ell_5'$ (because $v_1, v_2, v_3, v_4$ form a fundmanetal parallelopiped of the integer lattice), all integer points in $H'_3 \cap H'_4$ are contained in $H_5$, except for the points $v_3$ and $v_4$. Since these points are not contained in $H_3\cap H_4$, we must have $(H_3\cap H_4)_I\subseteq H_5$. Similarly, we have $(H_2\cap H_3)_I\subseteq H_6$.
Therefore, $(H_2\cap H_3)_I\cap (H_4\cap H_3)_I\subseteq H_5\cap H_6\subseteq H$. \qed
\end{proof}

\begin{theorem} \label{thm::helly}(Integer Helly's Theorem~\cite{bell1977theorem,Doignon1973,scarf1977observation,hoffman1979binding})
Let $\mathcal{I}$ be a finite family of convex sets in $\R^n$ such that $\bigcap_{C\in \mathcal{I}}C\cap \Z^n=\emptyset$, then there exists $\mathcal{I}'\subseteq \mathcal{I}$ such that $\bigcap_{C\in \mathcal{I}'}C\cap \Z^n=\emptyset$ and $|\mathcal{I}'| \leq 2^n$.
\end{theorem}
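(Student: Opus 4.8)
The plan is to prove the equivalent contrapositive: \emph{if every subfamily $\mathcal{I}'\subseteq\mathcal{I}$ with $|\mathcal{I}'|\le 2^n$ has a common integer point, then $\mathcal{I}$ itself has one}. I would argue by induction on $m:=|\mathcal{I}|$. If $m\le 2^n$ there is nothing to prove, so assume $m\ge 2^n+1$ and write $\mathcal{I}=\{K_1,\dots,K_m\}$. For each index $i$, the subfamily $\{K_j:j\ne i\}$ has $m-1$ members and inherits the hypothesis that all of its $(\le 2^n)$-subfamilies have a common integer point; by the induction hypothesis it has a common integer point $z_i\in\bigcap_{j\ne i}K_j\cap\Z^n$. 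If some $z_i$ also lies in $K_i$ we are done, so assume $z_i\notin K_i$ for all $i$. One checks the $z_i$ are pairwise distinct: if $z_a=z_b$ with $a\ne b$, then $z_a\in\bigcap_{j\ne a}K_j$ and $z_a=z_b\in K_a$, contradicting $z_a\notin K_a$.

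Since $m\ge 2^n+1$, applying the pigeonhole principle to the residues of $z_1,\dots,z_m$ in $(\Z/2\Z)^n$ produces indices $a\ne b$ with $z_a\equiv z_b\pmod 2$; thus $w:=\tfrac12(z_a+z_b)\in\Z^n$, and $w$ is distinct from $z_a$ and $z_b$. For every $k\notin\{a,b\}$ we have $z_a,z_b\in K_k$ (since $z_a\in\bigcap_{j\ne a}K_j$ and $z_b\in\bigcap_{j\ne b}K_j$), so by convexity $w\in K_k$. Hence $w$ lies in all of the sets except possibly $K_a$ and $K_b$, and we also know $z_b\in K_a$, $z_a\in K_b$. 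The remaining task — and the heart of the proof — is to arrange, by selecting the witnesses $z_i$ \emph{extremally} rather than arbitrarily, that $w$ cannot escape $K_a$ or $K_b$, so that $w\in\bigcap_j K_j$, contradicting our assumption. Concretely, for each $i$ I would fix a halfspace $H_i\supseteq K_i$ obtained by separating $K_i$ from a witness integer point of $\bigcap_{j\ne i}K_j$, and then re-choose $z_i$ to be an integer point of $\bigcap_{j\ne i}K_j$ lying outside $H_i$ that is as close to $K_i$ as possible in the normal direction of $H_i$ (reducing to a bounded optimization by a preliminary recession-cone argument). The point of this choice is that if $w$ fell outside $K_a$ while remaining in every other $K_j$, then $w$ would itself be an admissible competitor in the optimization that defines $z_a$, and comparing $\langle a_a,w\rangle=\tfrac12(\langle a_a,z_a\rangle+\langle a_a,z_b\rangle)$ with the optimality of $z_a$ and with $z_b\in K_a\subseteq H_a$ would yield a numerical contradiction; symmetrically for $K_b$.

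The main obstacle is exactly this extremal-selection step: one must choose the witness integer points (and the accompanying separating halfspaces) carefully and coherently enough that the parity midpoint provably lands inside \emph{every} set, and one must handle the case in which $\bigcap_{j\ne i}K_j$ is unbounded in a direction relevant to the separation. The combinatorial skeleton (induction on $m$, witnesses from removing one set, parity pigeonhole forcing the bound $2^n$) is robust; the delicate part is the quantitative geometry of the midpoint. Since this is a classical result of Doignon, Bell, Scarf, and Hoffman~\cite{bell1977theorem,Doignon1973,scarf1977observation,hoffman1979binding}, an alternative is simply to invoke it as a black box; we include the argument only for completeness.
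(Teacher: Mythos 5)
First, note that the paper itself does not prove Theorem~\ref{thm::helly}: it is quoted as the classical integer Helly theorem of Doignon, Bell, Scarf and Hoffman and used as a black box. So the fallback you mention in your last sentence (cite it and move on) is precisely what the paper does, and is the safe course here.

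As an actual proof, however, your sketch has a genuine gap, which you yourself flag. The opening is the standard one and is correct: reduce to a minimal family, pick integer witnesses $z_i\in\Z^n\cap\bigcap_{j\neq i}K_j$, observe they are pairwise distinct, and use the parity pigeonhole on $(\Z/2\Z)^n$ to get $a\neq b$ with integral midpoint $w=\tfrac12(z_a+z_b)$, which lies in every $K_k$ with $k\notin\{a,b\}$. The argument collapses exactly in the case where $w$ lies outside \emph{both} $K_a$ and $K_b$. Your extremal selection is made index by index: $z_a$ minimizes $\langle a_a,\cdot\rangle$ over integer points of $\bigcap_{j\neq a}K_j$ outside $H_a$. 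For $w$ to be an ``admissible competitor'' in that optimization you need $w\in\bigcap_{j\neq a}K_j$, in particular $w\in K_b$; in the both-escape case this fails, so no contradiction is obtained, and the situation is symmetric for $K_b$. This case is geometrically possible: on the segment $[z_a,z_b]$ the portion inside $K_a$ is a subsegment containing $z_b$ but not $z_a$, the portion inside $K_b$ is a subsegment containing $z_a$ but not $z_b$, and these two subsegments need not cover the midpoint. Two further (smaller) issues: even in the single-escape case, $w\notin K_a$ does not imply $w\notin H_a$, since the separating halfspace $H_a$ may be strictly larger than $K_a$, so the optimality comparison you invoke need not apply; and the attainment of the minimum (your recession-cone reduction) is asserted rather than argued. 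The published proofs need a genuinely additional device at exactly the step you leave open (a coherent global choice of witnesses with a finite potential that strictly decreases under replacement, together with a treatment of the case in which the midpoint misses both sets), so what you have is an honest partial sketch rather than a proof; given that the paper only cites the result, I would either supply that missing step in full or drop the sketch and keep the citation.
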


\begin{corollary}\label{cor::one-facet-helly}
Given a full dimensional polyhedron $P\in\R^2$ with at least four facets, assume only one of its facets $F$ contains integer points in its relative interior. Let $H$ be the facet defining halfspace of $F$. Furthermore, we assume $\intt(H)\cap P\cap\Z^2=\emptyset$. Then there exists three facet defining halfspaces for $P$ denoted by $H_1$, $H_2$ and $H_3$ other than $H$, such that $\intt(H)\cap H_1\cap H_2\cap H_3\cap \Z^2=\emptyset$ and $H$ is irredundant to $\intt(H)\cap H_1\cap H_2\cap H_3$.  
\end{corollary}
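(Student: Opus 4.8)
The plan is to invoke Integer Helly's Theorem (Theorem~\ref{thm::helly}) on a family of convex sets that, crucially, contains the \emph{open} halfspace $\intt(H)$ rather than $H$ itself. First I would write $P = H \cap H_1' \cap \cdots \cap H_m'$, where $H_1',\dots,H_m'$ are the facet-defining halfspaces of $P$ other than $H$; since $P$ has at least four facets, $m \ge 3$. As $P \subseteq H$, we have $\intt(H)\cap P = \intt(H)\cap H_1'\cap\cdots\cap H_m'$, so the hypothesis $\intt(H)\cap P\cap\Z^2=\emptyset$ is precisely the statement that the family $\mathcal{I}=\{\intt(H),H_1',\dots,H_m'\}$ of convex sets has no common integer point. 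Theorem~\ref{thm::helly} with $n=2$ then produces a subfamily $\mathcal{I}'\subseteq\mathcal{I}$ with $|\mathcal{I}'|\le 2^2=4$ and $\bigcap_{C\in\mathcal{I}'}C\cap\Z^2=\emptyset$.

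The key step is to show that $\intt(H)\in\mathcal{I}'$. If it were not, then $\mathcal{I}'$ would consist solely of halfspaces among $H_1',\dots,H_m'$, whence $\bigcap_{C\in\mathcal{I}'}C\supseteq H_1'\cap\cdots\cap H_m'\supseteq P$; but the hypothesis that $F$ contains a lattice point in its relative interior, together with $\relint(F)\subseteq P$, gives $P\cap\Z^2\ne\emptyset$, contradicting $\bigcap_{C\in\mathcal{I}'}C\cap\Z^2=\emptyset$. Hence $\mathcal{I}'$ contains $\intt(H)$ and at most three of the $H_i'$, and I would then pad this collection with further halfspaces from $\{H_1',\dots,H_m'\}$ (possible because $m\ge 3$) to obtain exactly three distinct ones, relabeled $H_1,H_2,H_3$. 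Since intersecting with more halfspaces only shrinks the set, $\intt(H)\cap H_1\cap H_2\cap H_3\cap\Z^2=\emptyset$ still holds.

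It remains to verify that $H$ is irredundant in $\intt(H)\cap H_1\cap H_2\cap H_3$, i.e., $H_1\cap H_2\cap H_3\not\subseteq H$. Here I would use that $F$, being a facet of a full-dimensional polyhedron in $\R^2$, is one-dimensional, so $\relint(F)\ne\emptyset$ and $\relint(F)$ lies in the interior of every facet-defining halfspace of $P$ other than $H$. Picking $p\in\relint(F)$ on the bounding line of $H$ and moving it an arbitrarily small amount in the outer normal direction of $H$ produces, by openness of the sets $\intt(H_i')$ and finiteness of the family, a point $q\notin H$ with $q\in\bigcap_i\intt(H_i')\subseteq H_1\cap H_2\cap H_3$, as required. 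The only genuine obstacle is the second paragraph: one must realize that $\intt(H)$ is indispensable in the Helly subfamily, and this is exactly where the hypotheses that $F$ (hence $P$) carries a lattice point and that $\intt(H)\cap P$ is lattice-free get used; everything else is bookkeeping about facets and small perturbations.
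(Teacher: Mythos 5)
Your proof is correct and follows exactly the route the paper intends (the corollary is stated without proof as a consequence of Theorem~\ref{thm::helly}): apply Integer Helly in $\R^2$ to the family consisting of the open halfspace $\intt(H)$ together with the other facet-defining halfspaces, note that $\intt(H)$ must appear in the size-$\le 4$ subfamily because $\relint(F)$ supplies an integer point of $P$, pad to three halfspaces, and get irredundancy by perturbing a point of $\relint(F)$ outward across the bounding line of $H$. Your write-up in fact supplies the details (indispensability of $\intt(H)$ and the perturbation step) that the paper leaves implicit, and they check out.
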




\begin{theorem}\label{thm::non-empty-IH}
Given a polyhedron $P\in \R^2$ such that $P\cap \Z^2\neq \emptyset$. Then $P_I$ is the intersection of the integer hulls of all the simplicial cones (or split sets) constructed from pairs of facets of $P$.
\end{theorem}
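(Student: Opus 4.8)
The plan is to reduce the general statement to the quadrilateral case handled in Lemma~\ref{lem::valid-ineq}, using the integer Helly theorem (Theorem~\ref{thm::helly}) to control which facets of $P$ matter. Denote by $Q$ the intersection of the integer hulls of all simplicial cones (or split sets) $(H_i \cap H_j)_I$ over pairs of facet-defining halfspaces $H_i, H_j$ of $P$. Clearly $P_I \subseteq Q$ since each $(H_i \cap H_j)_I$ is a valid relaxation of $P_I$; the content is the reverse inclusion $Q \subseteq P_I$. It suffices to show that every facet-defining halfspace $H$ of $P_I$ contains $Q$. Fix such an $H$, described by $\langle a, x\rangle \leq \delta$ with $a \in \Z^2$ having coprime coordinates and $\delta \in \Z$; let $F = P_I \cap \{ \langle a, x\rangle = \delta\}$ be the corresponding facet.

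First I would dispose of the case where $H$ (or rather its defining hyperplane shifted appropriately) is itself one of the facet halfspaces of $P$, or more precisely where $H$ arises as a CG cut from a single facet of $P$: then $H \supseteq \mathcal{C}(P) \supseteq Q$ trivially. The interesting case is when $F$ is a ``new'' facet cutting deeper into $P$. Here I would set up a maximal lattice-free convex set as in the proof of Lemma~\ref{lem::valid-ineq}: since $F$ is a facet of $P_I$, it contains at least two integer points, hence at least one integer point in its relative interior once we pass to the line $\ell$ through $F$; the region $P \cap \{\langle a, x\rangle \geq \delta\}$ has no integer points in its interior relative to this strip, and by ``pushing out'' the facets of $P$ on that side one of the three configurations of Theorem~\ref{thm::lat-free-2D} is reached. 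Using Corollary~\ref{cor::one-facet-helly} (the integer Helly consequence), only three other facet halfspaces $H_1, H_2, H_3$ of $P$ are needed so that $\{\langle a,x\rangle \geq \delta\} \cap H_1 \cap H_2 \cap H_3 \cap \Z^2 = \emptyset$ with $\{\langle a,x\rangle \geq \delta\}$ irredundant. Applying Lemma~\ref{lem::valid-ineq} to the quadrilateral $\{\langle a,x\rangle \geq \delta\}\cap H_1 \cap H_2 \cap H_3$ (or a degenerate triangle version of it, which is the easy sub-case of Theorem~\ref{thm::lat-free-2D}) yields $H \supseteq (H_i \cap H_j)_I \cap (H_i \cap H_k)_I \cap (H_j \cap H_k)_I$ for the relevant pairs among $\{1,2,3\}$, and in particular $H \supseteq Q$.

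The main obstacle I anticipate is the bookkeeping in the ``pushing out'' step: verifying that when we enlarge $P$ to a maximal lattice-free set, the facets we pushed out are precisely the facets picked out by Corollary~\ref{cor::one-facet-helly}, and that the resulting configuration is genuinely one of the three types of Theorem~\ref{thm::lat-free-2D} rather than something degenerate that Lemma~\ref{lem::valid-ineq} does not literally cover (e.g.\ a triangle, or a quadrilateral where one of the pushed-out halfspaces coincides with $H$ itself). One must check that $F$, being a facet of the \emph{integer hull}, forces enough integer points on its line to make the hypothesis ``$F_1$ contains an integer point in its relative interior'' of Lemma~\ref{lem::valid-ineq} hold, and handle separately the boundary case where only a simplicial cone (two facets of $P$) already suffices — there Theorem~\ref{thm:rank-IH} together with the two-facet relaxation does the job directly. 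Once the configuration is correctly identified, the inclusion $H \supseteq Q$ is immediate from Lemma~\ref{lem::valid-ineq}, and intersecting over all facets $H$ of $P_I$ gives $Q \subseteq P_I$, completing the proof.
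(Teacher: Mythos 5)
Your overall skeleton is the same as the paper's (fix a valid inequality for $P_I$, use Corollary~\ref{cor::one-facet-helly} to reduce to three facet halfspaces plus the cut, then invoke Lemma~\ref{lem::valid-ineq}), but there is a genuine gap: you never arrange for the hypotheses of those two statements to hold. Lemma~\ref{lem::valid-ineq} needs the bounding lines of the three halfspaces other than the cut to contain \emph{no integer points at all}, and both it and Corollary~\ref{cor::one-facet-helly} need the cut's facet to be the \emph{only} facet with an integer point in its relative interior. For the body $P \cap \{x:\langle a,x\rangle \geq \delta\}$ that you work with, both can fail: facet lines of $P$ may contain integer points, and the integer points of the facet $F$ of $P_I$ may sit at vertices of this body (e.g.\ when $F$ touches the boundary of $P$), not in the relative interior of its facet on the hyperplane $\langle a,x\rangle=\delta$. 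The paper's fix, missing from your sketch, is the preliminary relaxation of $P$ to $P'$: every facet inequality $\langle a_i,x\rangle\leq\delta_i$ with $\delta_i\in\Z$ is relaxed to $\delta_i+\frac{1}{2}$, so that no facet line of $P'$ meets $\Z^2$ and $P_I=P'_I\subseteq \intt(P')$; this is precisely what places an integer point of $F$ in the relative interior of the facet defined by the cut and makes Corollary~\ref{cor::one-facet-helly} and Lemma~\ref{lem::valid-ineq} applicable, and validity is transferred back to $P$ at the end via $H_i\subseteq H_i'$. Your substitute device, ``pushing out'' the facets of $P$ to a maximal lattice-free set at the level of the theorem, does not repair this: that pushing-out is the internal mechanism of the \emph{proof} of Lemma~\ref{lem::valid-ineq} and itself relies on the integer-free facet lines (new integer points can only appear in the interior), so without the perturbation the argument is unsupported rather than merely bookkeeping-heavy.

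A secondary point: you reduce to facet-defining halfspaces of $P_I$. If $P_I$ is lower-dimensional (a point, a segment, or a line), its facet-defining halfspaces do not determine it, so the reduction ``it suffices to show every facet-defining halfspace of $P_I$ contains $Q$'' is incomplete; the paper instead takes an arbitrary valid halfspace $H\supseteq P_I$ and strengthens it until its bounding hyperplane meets $P_I$, which covers all cases uniformly. This is easy to repair, but as written your proof only establishes the full-dimensional case.
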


\begin{proof}
Let $H$ be a halfspace containing $P_I$ and described by $\{x\in\R^2:\langle a,x \rangle\geq \delta\}$. We wish to show that $H$ is valid for the intersection of the integer hulls of all the simplicial cones (or split sets) constructed from pairs of facets of $P$. For this purpose, we may strengthen $H$ such that its bounding hyperplane has a nonempty intersection with $P_I$, and show that this strengthening has the desired property. Let $H'$ be the halfspace $\{x\in\R^2:\langle a,x \rangle\leq \delta\}$. Let $P$ be the intersection of halfspaces $H_i:=\{x\in \R^2:\langle a_i,x \rangle\leq \delta_i\}$, and the two entries of $a_i$ be coprime for $i=1,\dots,m$. If $\delta_i\in \Z$, then let $H_i'$ be the {halfspace} $\{x\in\R^2:\langle a_i,x \rangle\leq \delta_i+\frac{1}{2}\}$. Otherwise, $H_i'=H_i$. Let $P'$ be the intersection of $H_i'$ for $i=1,\ldots,m$. Note that $P_I=P'_I\subseteq\intt(P')$, and there is no integer point on any facet defining line of $P'$.

~

\noindent {\bf Claim}: $H$ is valid for the intersection of the integer hulls of all the simplicial cones (or split sets) constructed from pairs of facets of $P'$.
\begin{proof}
By our assumption, the bounding hyperplane of $H$ contains integer points from $P_I$. Thus, $H'\cap P_I\neq \emptyset$. Then $P'\cap H'$ is a lattice-free set with one facet defined by $H'$ and containing at least one integer point in its relative interior since $P'_I=P_I\subseteq \intt(P)$. If $P'\cap H'$ has only two or three facets, the proof is trivial. Otherwise, by Corollary  $\ref{cor::one-facet-helly}$, there exist three facet defining halfspaces of $P'$, say $H'_1$, $H'_2$ and $H'_3$, such that $Q:=H'_1\cap H'_2\cap H'_3\cap H'$ is lattice free and nonempty since $H'\cap P_I\neq \emptyset$, and $H'$ is irredundant to $Q$. Moreover, the facet of $Q$ defined by $H'$ contains a integer point in its relative interior since $P_I\subseteq \intt(P)$. 
If $Q$ only has two or three facets, then the proof is trivial. If $Q$ has four facets, then by Lemma \ref{lem::valid-ineq}, we can finish the proof.\qed
\end{proof}

The claim immediately implies that $P_I$ is the intersection of the integer hulls of all the simplicial cones (or split sets) constructed from pairs of facets of $P'$. The proof can be finished by the fact that $H_i\subseteq H_i'$ for $i=1,\ldots, m$.\qed
\end{proof}

\begin{theorem}\label{thm::empty-IH}
Given a polyhedron $P$ such that $P\cap \Z^2= \emptyset$, we have that the intersection, denoted by $U$, of the integer hulls of all the simplicial cones (or split sets) constructed from pairs of facets of $P$, is empty.
\end{theorem}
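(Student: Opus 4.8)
The plan is to reduce, via the Integer Helly Theorem (Theorem~\ref{thm::helly}), to a bounded polytope with at most four facets, and then to dispatch the resulting small cases. Write $P=\bigcap_{i=1}^{m}H_i$ in irredundant form, with $H_i=\{x:\langle a_i,x\rangle\le\delta_i\}$ facet-defining. Since $P\cap\Z^2=\emptyset$, Theorem~\ref{thm::helly} with $n=2$ gives an infeasible subfamily of at most $2^2=4$ of the $H_i$; passing to a minimal infeasible subfamily $\{H_1,\dots,H_r\}$, we have $2\le r\le 4$, using that a single rational halfspace of $\R^2$ always contains integer points. Every pair among $H_1,\dots,H_r$ is a pair of facets of $P$, so $U\subseteq\bigcap_{1\le i<j\le r}(H_i\cap H_j)_I$, and it suffices to show this latter intersection is empty. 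If $r=2$ we are done immediately, since $(H_1\cap H_2)_I=\conv\big((H_1\cap H_2)\cap\Z^2\big)=\emptyset$. I would also record that for $r\in\{3,4\}$ the set $Q:=\bigcap_{i=1}^{r}H_i$ is a bounded polytope with all $r$ facets irredundant (a triangle, resp.\ a quadrilateral): an unbounded $Q$ has a nontrivial recession cone, and a two-dimensional recession cone makes $Q$ contain infinitely many integer points, while a one-dimensional one forces two of the $H_i$ to be parallel and the strip they bound (possibly truncated by further facets) to already be integer-point-free, contradicting minimality.

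For $r=3$, the set $Q$ is a lattice-free triangle with $Q\cap\Z^2=\emptyset$ and no two of $a_1,a_2,a_3$ parallel. Every integer point of the cone $C_{12}:=H_1\cap H_2$ lies outside $Q=C_{12}\cap H_3$, hence satisfies $\langle a_3,\cdot\rangle>\delta_3$; taking convex hulls, $(C_{12})_I\subseteq\{x:\langle a_3,x\rangle>\delta_3\}$, and symmetrically $(C_{13})_I\subseteq\{x:\langle a_2,x\rangle>\delta_2\}$ and $(C_{23})_I\subseteq\{x:\langle a_1,x\rangle>\delta_1\}$. Since $Q$ is nonempty and bounded, $\cone\{a_1,a_2,a_3\}=\R^2$, so there is a strictly positive dependence $\lambda_1a_1+\lambda_2a_2+\lambda_3a_3=0$ with all $\lambda_i>0$. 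A point $x_0\in Q$ gives $0=\langle\sum_i\lambda_ia_i,x_0\rangle\le\sum_i\lambda_i\delta_i$, while a point $z$ in all three open halfspaces would give $0=\langle\sum_i\lambda_ia_i,z\rangle>\sum_i\lambda_i\delta_i$, a contradiction. Hence $(C_{12})_I\cap(C_{13})_I\cap(C_{23})_I=\emptyset$, so $U=\emptyset$.

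The case $r=4$ is the main obstacle and mirrors the analysis behind Theorem~\ref{thm::non-empty-IH}. Here $Q$ is a lattice-free quadrilateral with $Q\cap\Z^2=\emptyset$ and all four facets irredundant, and minimality forbids any two- or three-facet subsystem from being integer-point-free. Enclosing $Q$ in a maximal lattice-free convex set $B$ and invoking Theorem~\ref{thm::lat-free-2D}, I would split into cases on the type of $B$; the substantive case is $B$ a type-$3$ quadrilateral, whose four facets each carry a single integer point in their relative interior, with those points spanning a parallelogram of area $1$. In that case I would argue as in Lemma~\ref{lem::valid-ineq}: for each facet $i$ of $Q$, all integer points of the ``opposite'' cone $H_j\cap H_k$ (with $\{j,k\}$ the two facets of $Q$ not meeting facet $i$) lie outside $Q$ and hence, apart from the two corners of the unit parallelogram nearest that cone's apex, on one side of the lattice line through those two corners; intersecting these four inclusions with the trivial ones $(H_i\cap H_j)_I\subseteq H_i\cap H_j$ confines $\bigcap_{i<j}(H_i\cap H_j)_I$ to a region that the area-$1$ parallelogram geometry forces to be empty. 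The easier possibilities that $B$ is a split set or a triangle should yield either a pair or a triple of facets of $Q$ whose integer hulls already intersect emptily, by arguments in the spirit of the $r\le 3$ cases.

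The delicate feature of the $r=4$ case, where essentially all the work lies, is that $Q$ contains no integer points at all: one cannot enlarge a facet of $Q$ to place an integer point on it, since that point would then lie in \emph{every} pair-integer-hull and destroy the argument, so all the required inclusions must be proved for the strictly lattice-free $Q$ itself, with the enclosing quadrilateral $B$ used only to locate the four integer points and their unit parallelogram. The only other point requiring care is the reduction to a bounded, irredundant $Q$ in degenerate (unbounded, parallel-facet) configurations, which is routine; the cases $r=2$ and $r=3$ are short.
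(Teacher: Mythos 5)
There is a genuine gap: the four-facet case, which is the crux of the theorem, is never actually proved. Your $r\le 3$ cases are fine (the positive-dependence argument for a triangle works, and the unbounded/parallel degeneracies are handled correctly by minimality), but for $r=4$ you only sketch a plan ("I would argue as in Lemma~\ref{lem::valid-ineq}\dots", "should yield\dots"), and the sketch does not go through as stated. Lemma~\ref{lem::valid-ineq} crucially uses that one facet of the lattice-free quadrilateral carries an integer point in its relative interior and that the "pushed-out" facets are parallel translates of the original $H_i$, so that the integer points $v_1,\dots,v_4$ and the lattice lines $\ell_5,\ell_5'$ are aligned with the pair-cones $H_i\cap H_j$ of the original system. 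When $Q\cap\Z^2=\emptyset$ this alignment is exactly what you lose: an enclosing maximal lattice-free quadrilateral $B$ from Theorem~\ref{thm::lat-free-2D} has facets with no controlled relation to the facets of $Q$, so the claimed four inclusions of the pair-hulls into halfplanes bounded by the lattice lines of $B$'s unit parallelogram are not established, and neither is the final assertion that these inclusions force $\bigcap_{i<j}(H_i\cap H_j)_I=\emptyset$. You flag this yourself as "where essentially all the work lies", which is precisely the point: the statement is not proved.

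The paper avoids all of this by reducing the empty case to the already-proved nonempty case, Theorem~\ref{thm::non-empty-IH}, applied to \emph{triples} of facets. With $H_1,\dots,H_4$ the facet halfspaces in cyclic order, one may assume both $H_1\cap H_3$ and $H_2\cap H_4$ contain integer points (otherwise a single pair-hull is already empty). Then some triple, say $H_1\cap H_2\cap H_3$, has nonempty integer hull, and Theorem~\ref{thm::non-empty-IH} gives $(H_1\cap H_2\cap H_3)_I=(H_1\cap H_2)_I\cap(H_1\cap H_3)_I\cap(H_2\cap H_3)_I\supseteq U$; similarly $U\subseteq(H_1\cap H_2\cap H_4)_I\subseteq H_4$. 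But every integer point of $H_1\cap H_2\cap H_3$ lies strictly outside $H_4$ (since $P$ has no integer points), so $(H_1\cap H_2\cap H_3)_I\cap H_4=\emptyset$, hence $U=\emptyset$. If you want to salvage your write-up, replace your entire $r=4$ analysis by this two-line argument; no maximal lattice-free classification is needed in the empty case.
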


\begin{proof}
By Theorem \ref{thm::helly}, we can assume $P$ has at most four facets. If $P$ has two or three facets, then the proof is trivial. Therefore, assume $P$ has four facets and let $H_i$ for $i=1,\ldots,4$ denote the facet defining halfspaces in clockwise order. If $H_1\cap H_3$ or $H_2\cap H_4$ forms a split set, then the proof is trivial. So we assume both $H_1\cap H_3$ and $H_2\cap H_4$ contain integer points. Since $H_1\cap H_3 \cap \Z^2 \neq \emptyset$, $(H_1\cap H_3\cap H_2)_I$ or $(H_1\cap H_3\cap H_4)_I$ is not empty. Without loss of generality, assume $(H_1\cap H_3\cap H_2)_I\neq \emptyset$. By Theorem \ref{thm::non-empty-IH}, we have $(H_1\cap H_3\cap H_2)_I=(H_3\cap H_2)_I\cap (H_1\cap H_2)_I\cap (H_1\cap H_3)_I$. Therefore $U\subseteq (H_1\cap H_3\cap H_2)_I$. Similarly, using the fact that $H_2\cap H_4 \cap \Z^2 \neq \emptyset$, we can assume $(H_2\cap H_4\cap H_1)_I\neq \emptyset$ and have $U\subseteq (H_2\cap H_4\cap H_1)_I$. Hence $U\subseteq (H_1\cap H_3\cap H_2)_I \cap (H_2\cap H_4\cap H_1)_I\subseteq (H_1 \cap H_2\cap H_3)_I \cap H_4=\emptyset$.\qed
\end{proof}
\end{document}